\theoremstyle{plain}
\newtheorem{theorem}{Theorem}[section]
\newtheorem{lemma}{Lemma}[section]
\newtheorem{definition}{Definition}[section]
\newtheorem{remark}{Remark}[section]
\numberwithin{equation}{section}
\newcommand{\gn}{{\lfloor\gamma N\rfloor}}
\newcommand{\Nb}  {{\mathbb N}}
\newcommand{\Rb}  {{\mathbb R}}
\newcommand{\As} {{\mathcal A}}
\newcommand{\Fs} {{\mathcal F}}
\newcommand{\Ps} {{\mathcal P}}
\newcommand{\Ys} {{\mathcal Y}}
\newcommand{\al}{\alpha}
\renewcommand{\phi}{\varphi}
\newcommand{\la}{\lambda}
\newcommand{\ind}{1\!\kern-1pt \mathrm{I}}
\newcommand{\rsto}{]\!\kern-1.8pt ]}
\newcommand{\lsto}{[\!\kern-1.7pt [}
\newcommand\F{\mbox{I\kern-2pt F}}
\newcommand\1{{\bf 1}}
\title[Fractional binary markets under transaction costs]{Critical transaction costs and 1-step asymptotic arbitrage in fractional binary markets}
\author{Fernando Cordero}
\address{Faculty of Technology, University of Bielefeld, Universit\"{a}tsstr. 25, 33615 Bielefeld, Germany}
\email{fcordero@techfak.uni-bielefeld.de}
\author{Lavinia Perez-Ostafe}
\address{Department of Mathematics, ETH Zurich, R\"{a}mistrasse 101, 8092 Zurich, Switzerland}
\email{lavinia.perez@math.ethz.ch}
\date{\today}%
\begin{document}
\subjclass[2010]{60G22, 60G50, 91B24, 91B26}
\keywords{Fractional Brownian motion, fractional binary markets, transaction costs, arbitrage}


\begin{abstract}
We study the arbitrage opportunities in the presence of transaction costs in a sequence of binary markets approximating the fractional Black-Scholes model. This approximating sequence was constructed by Sottinen and named fractional binary markets. Since, in the frictionless case, these markets admit arbitrage, we aim to determine the size of the transaction costs needed to eliminate the arbitrage from these models. To gain more insight, we first consider only 1-step trading strategies and we prove that arbitrage opportunities appear when the transaction costs are of order $o(1/\sqrt{N})$. Next, we characterize the asymptotic behavior of the smallest transaction costs $\la_c^{(N)}$, called ``critical'' transaction costs, starting from which the arbitrage disappears. Since the fractional Black-Scholes model is arbitrage-free under arbitrarily small transaction costs, one could expect that $\la_c^{(N)}$ converges to zero. However, the true behavior of $\la_c^{(N)}$ is opposed to this intuition. More precisely, we show, with the help of a new family of trading strategies, that $\la_c^{(N)}$ converges to one. We explain this apparent contradiction and conclude that it is appropriate to see the fractional binary markets as a large financial market and to study its asymptotic arbitrage opportunities. Finally, we construct a $1$-step asymptotic arbitrage in this large market when the transaction costs are of order $o(1/N^H)$, whereas for constant transaction costs,  we prove that no such opportunity exists.
\end{abstract}
\maketitle
\section{Introduction}\label{1}
Significant academic research shows that the use of price models driven by fractional Brownian motion substantially increased, even if this was until recently rejected as these models are not free of arbitrage (see \cite{Rog}). Despite this drawback, these models are thought to describe real world markets in a better way. This is because, when the Hurst parameter is strictly bigger than $1/2$, the fractional Brownian motion exhibits self-similarity and long-range dependence, properties that were observed in empirical studies of financial time series (see \cite{Con} and \cite{WiTaTe} for a discussion on the relevance of these properties in financial modelling). Moreover, when one introduces transaction costs, the arbitrage opportunities disappear (see for ex. \cite{G:R:S:2008}), which makes it then possible to deduce a valuation theory that is built on no arbitrage arguments.

A typical example of such market is the fractional Black-Scholes model, which is in fact a Black-Scholes type model where the randomness of the risky asset comes from a fractional Brownian motion. There is extensive literature around the properties of this model, in particular, explicit arbitrage opportunities can be found in \cite{Nu}, \cite{SoVa}, \cite{Che} and \cite{BeSoVa}. Additionally, as shown by Sottinen in \cite{Sotti}, the fractional Black-Scholes model can be approximated by a sequence of binary models, called ``fractional binary markets''. This result is based on an analogue of the Donsker's theorem, which, in this case, states that the fractional Brownian motion can be approximated by a ``disturbed'' random walk. The markets in this approximating sequence will be our object of study in this paper. The motivation lies not only in the fact that these models behave asymptotically as the fractional Black-Scholes model, but also in their simplicity, coming from their binary structure.

A $N$-step binary market is a market in which the stock price $(S_n)_{n=0}^N$ is an adapted stochastic process with strictly positive values and such that at time $n$ the stock price evolves from $S_{n-1}$ to either $\alpha_n\, S_{n-1}$, in which case we say that the stock price process takes a step up, or $\beta_n\,   S_{n-1} $, in which case we say that the stock price process takes a step down, where $\beta_n <\alpha_n$. In addition, the parameters $\alpha_n$ and $\beta_n$, for $n\in\{1,..,N\}$, depend only on the past, which means that they can be seen as real valued functions on $\{-1,1\}^{n-1}$. One advantage of working with binary models is that the study of arbitrage, in the frictionless case, reduces to the study of a family of conditions imposed on the nodes of the binary tree $\cup_{n=1}^{N}\{-1,1\}^{n-1}$ (see \cite{Dzh}). When one of these conditions is not verified in a node of the binary tree, we call this node an arbitrage point. In this context, we call $N$-fractional binary market the $N$-step binary market in the sequence of fractional binary markets. Sottinen showed in \cite{Sotti} that the fractional binary markets without friction admit arbitrage and such an opportunity is explicitly constructed using the path information starting from time zero. Moreover, in the recent work \cite{CKP}, it was proved that the asymptotic proportion of arbitrage points in the fractional binary markets is strictly positive and a characterization of that quantity, in terms of the Hurst parameter, is provided. 

In the present paper we aim to analyse the sensibility of the arbitrage condition to the presence of proportional transaction costs, first referring to each fixed $N$-fractional binary market and then considering the whole sequence as a large financial market. In the latter case, the notion of arbitrage is replaced by the concept of ``asymptotic arbitrage'', which was introduced by Kabanov and Kramkov in \cite{Kab:Kra:94} and \cite{Kab:Kra:1998} and further studied in \cite{K:S:1996} and \cite{K:Sch:1996} for frictionless markets and in \cite{Le:Os} and \cite{Kl:Le:Pe} for the case of transaction costs. 

We first treat independently each market in the sequence and we look for arbitrage opportunities under transaction costs. More precisely, we study the smallest transaction costs $\la_c^{(N)}$, called ``critical'' transaction cost, starting from which the $N$-fractional binary market is arbitrage free. Since the arbitrage opportunities disappear when arbitrarily small transaction costs are introduced in the fractional Black-Scholes model, one can expect that, asymptotically, the same behavior occurs also in the fractional binary markets. This would be the case if the sequence of critical transaction costs converges to zero. Surprisingly, the behavior will be opposite to what is expected as we will show that, in fact, $\la_c^{(N)}$ converges to one and not to zero. We will approach this problem in two steps. First, in order to get some intuition, we study the existence of arbitrage under transaction costs in our models, when we restrict ourselves to use only $1$-step self-financing strategies. We know from \cite{Sotti} that, if the price process takes steps down (up) till some time $n_0$, with $n_0$ big enough, then the price will decrease (respectively, increase) from time $n_0$ to time $n_0+1$. An arbitrage is then explicitly constructed, in the frictionless case, by going short (respectively, long) at time $n_0$ and liquidating at time $n_0+1$. Using the same idea, we construct 1-step arbitrage opportunities in the fractional binary markets, which are subject to transaction costs of order $o(1/\sqrt{N})$. In a second step, we look to more general, but still elementary, self-financing strategies. The key point in the construction of the new arbitrage opportunities is that, if the stock price process takes only steps down a proportional number of $N$ times, $\gamma N$, then starting from this time the stock price will decrease a proportional number of $N$ times, $P_\gamma N$. We can then construct an arbitrage (in the frictionless case) by short-selling one unit of stock at time $\gamma N$ and then liquidating the position at the later time $\gamma N +P_\gamma N$. Next, we extend the previous construction, in the natural way, to the case with friction, and we prove that, if the transaction costs $\lambda_N$ are smaller than $1-e^{-C\sqrt{N}}$, for some constant $C>0$, then the corresponding self-financing strategy provides an arbitrage opportunity in the $N$-fractional binary market. As a consequence, we deduce that $\la_c^{(N)}$ converges to $1$. This does not contradict the absence of arbitrage in the fractional Black-Scholes with friction and we provide an explanation for that. In this context, the notion of asymptotic arbitrage seems to reflect better the abovementioned property of the limit market.
Following this line, we construct a 1-step asymptotic arbitrage of first kind in the fractional binary markets when the transaction costs are of order $o(1/N^H)$ and we show that if we consider constant transaction costs these possibilities disappear.

The paper is organized as follows. In Section~\ref{2}, we start by recalling some notations and definitions concerning binary markets that we will use along this work. When talking about arbitrage opportunities with proportional transaction costs, the concepts of arbitrage, self-financing system and critical transaction cost are introduced. Moreover, the concept of asymptotic arbitrage is recalled. We end this part with a brief presentation of fractional binary markets and recall some estimates obtained in \cite{CKP} for the involved quantities. In Section~\ref{3}, Section~\ref{4} and Section \ref{5} are concentrated the main results. In Section~\ref{3}, we construct a sequence of 1-step self-financing strategies leading to arbitrage opportunities in the fractional binary markets, when they are subject to transaction costs converging fast enough to zero. In Section~\ref{4}, we show that the sequence of critical transaction costs associated to the fractional binary markets converges to $1$, by constructing a new well-chosen sequence of trading strategies leading to arbitrage opportunities under ``big'' transaction costs. We end this paper with Section~\ref{5}, which contains some conclusions on the existence of 1-step asymptotic arbitrage in the sequence of fractional binary markets under small transaction costs.


\section{Preliminaries}\label{2}

\subsection{Binary markets}\label{bmao}
Let $(\Omega,\Fs,{(\Fs_n)}_{n=0}^N, P)$ be a finite filtered probability space. By a binary market we mean a market in which two assets (a bond $B$ and a stock $S$) are traded at successive times $t_0=0<t_1<\cdots<t_N$. The evolution of the bond and stock is described, for $n\in\{1,...,N\}$, by:
\begin{equation}\label{stock}
 B_n=(1+r_n)B_{n-1}\,\textrm{ and }\,S_n= \left(a_n+(1+X_n)\right)S_{n-1},
\end{equation}
where $r_n$ and $a_n$ are the interest rate and the drift of the stock. Here, $B_n$ and $S_n$ denote the value of $B$ and $S$ in the time interval $[t_n,t_{n+1})$. The value of $S$ at time $0$ is given by a constant, i.e. $S_0=s_0$. From now on we assume, for the sake of simplicity, that the bond plays the role of a num\'eraire, and, in this case, that it is equal to $1$ at every time $n$ ($r_n=0$). The process $(X_n)_{n=0}^N$ is an adapted stochastic process starting at $X_0=x_0$ and such that, at each time $n$, $X_n$ can take only two possible values $u_n$ and $d_n$ with $d_n<u_n$. When $X_n$ equals $u_n$, we say that the price process takes a step up, and when $X_n$ is given by $d_n$, we say that the price process takes a step down. While $a_n$ from \eqref{stock} is deterministic, the values of $u_n$ and $d_n$ may depend on the path of $X$ up to time $n-1$. Note that the history of the process before time $n$ can be encoded by a vector in ${\{-1,1\}}^{n-1}$, where having $1$ (respectively $-1$) in position $k$ means that at time $k$ the process takes a step up (respectively down). Consequently, the parameters $u_n$ and $d_n$ can be seen as real valued functions on ${\{-1,1\}}^{n-1}$ ($u_1$ and $d_1$ are constants). 

\subsection{Arbitrage opportunities under transaction costs}\label{2.2}
From now on we consider binary markets $S$ as introduced in Section~\ref{bmao} but with proportional transaction costs $\lambda\in[0,1]$, meaning that the bid and ask price of the stock $S$ are modeled by the processes ${((1-\lambda)S_n)}_{n=0}^N$ and ${(S_n)}_{n=0}^N$ respectively. More precisely, we consider without loss of generality that we pay $\la$ transaction costs only when we sell, and not when we buy.

The notion of arbitrage can be seen in an intuitive way as the possibility to make a profit in a financial market without risk and without net investment of capital. This idea is formalized for our framework, i.e for a binary market, where the bond plays the role of a num\'{e}raire, in the following definitions.

\begin{definition}[$\lambda$-self-financing strategy]\label{sfs}
 Given $\lambda\in[0,1]$, a $\lambda$-self-financing strategy for the process ${(S_n)}_{n=0}^N$ is an adapted process $\phi={(\phi_n^0,\phi_n^1)}_{n=-1}^N$ satisfying, for all $n\in\{0,...,N\}$, the following condition:
  \begin{equation}\label{selffin}
  \phi_n^0-\phi_{n-1}^0\leq -{(\phi_n^1-\phi_{n-1}^1 )}^+\,S_n\, +\, (1-\lambda)\,{(\phi_n^1-\phi_{n-1}^1 )}^-\,S_n.
 \end{equation}
Here $\phi^0$ denotes the number of units we hold in the bond and $\phi^1$ denotes the number of units in the stock.
\end{definition}

\begin{definition}[$\lambda$-arbitrage]\label{arb} Given $\lambda\in[0,1]$, we say that the process ${(S_n)}_{n=0}^N$ admits for a $\lambda$-arbitrage, or arbitrage under transaction costs $\lambda$, if there is a $\lambda$-self-financing strategy $\phi={(\phi_n^0,\phi_n^1)}_{n=-1}^N$ starting at $(\phi_{-1}^0,\phi_{-1}^1)=(0,0)$
verifying the following conditions:
\begin{itemize}
 \item $V_N^\la(\phi)\geq 0\quad P-a.s.$
 \item $P\left(V_N^\la(\phi)>0\right)>0$,
\end{itemize}
where $V_n^\la(\phi)$ represents the liquidated value of the portfolio at time $n$ and is given, for each $n\in\{0,\ldots,N\}$, by
$$V_n^\la(\phi):=\phi_n^0+(1-\la)(\phi_n^1)^+S_n-(\phi_n^1)^-S_n.$$ 
If $\la=0$, we simply write $V_n(\phi)$ instead of $V_n^0(\phi)$.
\end{definition}
\begin{remark}\label{1step}
 Along this work, when constructing arbitrage opportunities, special interest will be attributed to a certain kind of self-financing trading strategy, for which one does nothing till a fixed time point, then, depending on the position in the binary tree, we go long or short in the stock and immediately liquidate this position at the very next time step. We call this type of strategy ``1-step'' self-financing strategy.
\end{remark}

In the context of frictionless binary markets, the arbitrage condition can be expressed in terms of the parameters of the model. More precisely, we know by Proposition 3.6.2 in \cite{Dzh} that a binary market excludes arbitrage opportunities if and only if for all $n\in\{1,...,N\}$ and $x\in{\{-1,1\}}^{n-1}$, we have:
\begin{equation}\label{nac1}
d_n(x)<-a_n< u_n(x).
\end{equation}
If for some $n\in\{1,...,N\}$ and $x\in{\{-1,1\}}^{n-1}$, the above condition is not verified, then $x$ is called an ``arbitrage point''. Note that if $x$ is not an arbitrage point, then starting from $x$, the price process takes a step up (down) if and only if its value strictly increases (respectively decreases) at the next time. However, this is not anymore true if $x$ is an arbitrage point.

In the friction case, the arbitrage condition was studied in \cite{CKO}, where the authors provide a characterization of the smallest transaction costs, called ``critical'' transaction costs and denoted by $\la_c$, needed to remove arbitrage opportunities, i.e
\begin{equation}\label{lac}
 \lambda_c=\inf\{\lambda\in[0,1]:\textrm{ there is no $\la$-arbitrage}\}.
\end{equation}
More precisely, from \cite[Corollary 5.1]{CKO} we know that
$$\lambda_c=1-\sup\limits_{Q\in \Ps_1(\Omega)}\rho(Q),$$
where $\Ps_1(\Omega)$ denotes the space of all probability measures on $(\Omega,\Fs)$ and the function $\rho:\Ps_1(\Omega)\rightarrow [0,1]$ is defined by means of the parameters of the model $\{a_n,u_n,d_n\}_{n\geq 1}$. Therefore, the problem of computing $\lambda_c$ leads to solve an optimization problem, which, depending on the structure of the market, can be very difficult. Nevertheless, for 1-step binary markets, $\lambda_c$ can be explicitly computed. In addition, if we decompose a multi-step binary market in 1-step sub-markets, one can obtain the following lower bound for $\lambda_c$ (see \cite[Proposition 3.1]{CKO}):
\begin{align}\label{lowbd}
\lambda_c&\geq1-\min\limits_{n\in\{1,...,N\}}\left\{\min\limits_{x\in{\{-1,1\}}^{n-1}}\left\{(1+a_n+u_n(x))\wedge\frac{1}{1+a_n+d_n(x)}\wedge 1\right\}\right\}.
\end{align}
In general, the study of $\lambda$-arbitrage opportunities is much more complex than only looking for $\la$-arbitrage opportunities in the 1-step sub-binary markets and the previous lower bound can be not very accurate. We also point out, that all the aforementioned results are obtained by means of a dual approach, in which the existence of arbitrage opportunities (resp. $\la$-arbitrage opportunities) is related to the existence of an equivalent martingale measure (resp. a $ \la$-consistent price system). In particular, this dual approach does not provide explicit arbitrage opportunities. 

Our approach will be based on the construction of explicit families of self-financing strategies. In this respect, an important role will be played by the following notion.

\begin{definition}[Self-financing system]\label{sfsm}
A self-financing system for the process ${(S_n)}_{n=0}^N$ is a family $\Phi=\{\phi(\la)\}_{\la\in[0,1]}$, where for all $\la\in[0,1]$, $\phi(\la)$ is a $\la$-self-financing strategy. Given a self-financing system $\Phi=\{\phi(\la)\}_{\la\in[0,1]}$, we define
\begin{equation}\label{laPhi}
 \la(\Phi):=\inf\{\la\in[0,1]:\phi(\la)\textrm{ is not a $\la$-arbitrage}\}.
\end{equation}
\end{definition}
\begin{remark}\label{ilplc}
Note that if $\Phi=\{\phi(\la)\}_{\la\in[0,1]}$ is a self-financing system, then $\la(\Phi)$ can be expressed as follows
$$\la(\Phi)=\inf\{\la\in[0,1]:P(V_N^\la(\phi(\la))>0)=0\textrm{ or }P(V_N^\la(\phi(\la))<0)>0\}.$$
In addition, from their definitions, we have that $\la(\Phi)\leq \la_c$. Therefore, the construction of self-financing systems provides lower bounds for $\la_c$. In particular, if a self-financing system $\Phi=\{\phi(\la)\}_{\la\in[0,1]}$ verifies that $\la(\Phi)>0$, we can conclude that, for all $\la\in[0,\la(\Phi))$, $\phi(\la)$ provides a $\la$-arbitrage opportunity. 
\end{remark}

\subsection{Asymptotic arbitrage under transaction costs}
In this paper, we don't limit our study only to the arbitrage opportunities for an $N$-fractional binary market, but we are also interested in obtaining answers to this problem when the time grid of the approximating sequence of fractional binary markets becomes finer and finer, i.e. $N\to\infty$. We first look to this problem by studying the limit behavior of the sequence of critical transaction costs associated to the fractional binary markets. In a second approach, we interpret the sequence of fractional binary markets as a large financial market and, in this case, replace the notion of arbitrage, as presented in Section~\ref{2.2}, by a new concept. Kabanov and Kramkov defined it as ``asymptotic arbitrage'', \cite{Kab:Kra:94}, and distinguished between two kinds: asymptotic arbitrage of the first kind (AA1) and asymptotic arbitrage of the second kind (AA2). We recall now their definitions. For a detailed presentation we refer the reader to \cite{K:S:1996}, \cite{K:Sch:1996} and \cite{Kab:Kra:1998} for the frictionless case and to \cite{Kl:Le:Pe} for markets with friction. Consider a sequence of markets ${\{S^N\}}_{N\geq 1}$, where $S^N=(S_n^N)_{n=0}^N$, and fix a sequence ${\{\lambda_N\}}_{N\geq 1}$ of real numbers $0<\lambda_N<1$.

\begin{definition}\label{AA1d}
 There exists an asymptotic arbitrage of the first kind (AA1)  with transaction costs $\la_N$  if there exists a subsequence of markets (again denoted by $N$) and self-financing strategies $\phi^N=(\phi^{N,0}, \phi^{N,1})$ with zero endowment for $S^N$ such that
\begin{enumerate}
\item{($c_N$-admissibility condition)} For all $i=0,\ldots,N$, $$V^{\la_N}_i(\phi^N)\geq-c_N,$$
\item $\lim_{N\to\infty}P^N(V_{N}^{\la_N}(\phi^N)\geq C_N)>0$
\end{enumerate}
where $c_N$ and $C_N$ are sequences of strictly positive real numbers with $c_N\to0$ and $C_N\to\infty$. 
\end{definition}

\begin{definition}\label{AA2d}
 There exists an asymptotic arbitrage of the second kind (AA2) with transaction costs $\la_N$ if there exists a subsequence of markets (again denoted by $N$) and self-financing strategies $\phi^N=(\phi^{N,0}, \phi^{N,1})$ with zero endowment for $S^N$ and $\alpha>0$ such that
\begin{enumerate}
\item{($1$-admissibility condition)} For all $i=0,\ldots,N$, $$V^{\la_N}_i(\phi^N)\geq-1,$$
\item $\lim_{N\to\infty}P^N(V^{\la_N}_{N}(\phi^N)\geq \alpha)=1$.
\end{enumerate}
\end{definition}

These two types of asymptotic arbitrage can be intuitively explained as follows. AA1 can be seen as the opportunity of getting arbitrarily rich with strictly positive probability by taking an arbitrarily small risk. AA2 gives the opportunity of gaining at least something, even if only a very small amount, with probability arbitrarily close to one, while taking the risk of losing an amount of money which is uniformly bounded in time. The key difference between the two notions is that in the latter, although the chance of profit is very likely, the risk is not vanishing any more.

\subsection{Fractional binary markets}
Sottinen introduces in \cite{Sotti} the fractional binary markets as a sequence of binary markets approximating the fractional Black-Scholes model. By the latter we mean a Black-Scholes type model in which the randomness of the risky asset comes from a fractional Brownian motion, i.e. the dynamics of the bond and stock are given by:
\begin{equation}\label{fbse}
 dB_t=r(t)\,B_t\, dt\quad\textrm{and}\quad dS_t^H=(a(t)+\sigma\,dZ^H_t)\, S_t^H,
\end{equation}
where $\sigma>0$ is a constant representing the volatility and $Z^H$ is a fractional  Brownian motion of Hurst parameter $H>1/2$. The functions $r$ and $a$ are deterministic and represent the interest rate and the drift of the stock. We assume in the sequel that $r=0$ and that $a$ is continuously differentiable. Since all the parameters of the model are understood to depend on the Hurst parameter $H$, we will avoid to mention this dependence. 

For the sake of simplicity, we make use of the results obtained in \cite{CKP} to provide here an alternative, but equivalent definition of the fractional binary market. First, we consider a sequence of i.i.d. random variables $\{\xi_i\}_{i\geq 1}$ such that $$P(\xi_1=-1)=P(\xi_1=1)=1/2,$$ 
and we define the filtration $\{\Fs_{i}:=\sigma(\xi_1,\dots,\xi_i)\}_{i\geq 1}$.

For each $N>1$, the $N$-fractional binary market is the binary market in which the bond and the stock are traded at times $\{0, \frac{1}{N},...,\frac{N-1}{N}, 1\}$ under the dynamics:
$$B_n^{(N)}=1\quad\textrm{and}\quad S_n^{(N)}=\left(1+a_n^{(N)}+\frac{X_n}{N^H}\right)\, S_{n-1}^{(N)},\quad 1\leq n\leq N$$
where $a_n^{(N)}=\frac{1}{N}a(n/N)$ and $S_0^{(N)}=s_0$. As shown in \cite{CKP}, the process $(X_n)_{n\geq 1}$ can be expressed as
\begin{equation}\label{scale}
X_n:=\sum\limits_{i=1}^{n-1}j_n(i)\,\xi_i+g_n\xi_n,
\end{equation}
where
$$j_n(i):=\sigma\, c_H \left(H-\frac12\right)\int\limits_{i-1}^{i}x^{\frac{1}{2}-H}\left(\int\limits_0^1 (v+n-1)^{H-\frac{1}{2}} (v+n-1-x)^{H-\frac{3}{2}}dv\right) dx,$$
and
$$g_n:=\sigma\, c_H \left(H-\frac12\right)\int\limits_{n-1}^{n}x^{\frac{1}{2}-H}(n-x)^{H-\frac{1}{2}}\left(\int\limits_0^1 (y(n-x)+x)^{H-\frac{1}{2}}y^{H-\frac{3}{2}}dy\right)dx,$$
with $c_H$ a normalizing constant given by
\begin{equation}\label{ch}
c_H:=\sqrt{\frac{2H\,\Gamma\left(\frac{3}{2}-H\right)}{\Gamma\left(H+\frac{1}{2}\right)\,\Gamma(2-2H)}}.
\end{equation}

From \eqref{scale}, we see that $X_n$ is the sum of a process depending only on the information until time $n-1$ and a process depending only on the present. More precisely, $X_n=\Ys_n+g_n\xi_n$, where 
$$\Ys_n:=\sum_{i=1}^{n-1}j_n(i)\xi_i.$$
In the same way, we define for each $(x_1,\dots,x_{n-1})\in{\{-1,1\}}^{n-1}$
$$\Ys_n(x_1,\dots,x_{n-1}):=\sum_{i=1}^{n-1}j_n(i)x_i.$$
Note that from definition $\Ys_n=\Ys_n(\xi_1,\ldots,\xi_{n-1})$.

Using these notations, the parameters of the binary market $u_n^{(N)}$ and $d_n^{(N)}$, already introduced in Section~\ref{bmao}, can be expressed as functions on ${\{-1,1\}}^{n-1}$ by setting, for $\vec{x}\in{\{-1,1\}}^{n-1}$:
$$u_n^{(N)}(\vec{x}):=\frac{\Ys_{n}(\vec{x})+g_n}{N^H}\quad\textrm{ and }\quad d_n^{(N)}(\vec{x}):=\frac{\Ys_{n}(\vec{x})-g_n}{N^H}.$$ 

In order to simplify the presentation, we sometimes use the notation $\vec{\xi}_k$ to denote the random vector $(\xi_1,\ldots,\xi_k)$ in $\{-1,1\}^k$. We also use $\vec{1}_k$ to denote the vector in $\Rb^k$ with all its coordinates equal to $1$.

\subsubsection{Some useful estimations}\label{est}
We briefly recall some estimations obtained in \cite{CKP} for the quantities involved in the definition of the fractional binary markets, i.e., $a_n^{(N)}$, $j_n$ and $g_n$. We avoid the proofs and we invite the reader to directly consult \cite{CKP}.

\begin{lemma}\label{ej1}
For all $1\leq i\leq n-1< N$, we have
$$c_* \,(n-1)^{H-\frac{1}{2}}\,I_n(i)\leq j_n(i)\leq c_*\, n^{H-\frac{1}{2}}\,I_n(i),$$
where $c_*:=\sigma c_H$,
$$I_n(i):=\int\limits_{i-1}^i x^{\frac{1}{2}-H}\phi_n(x)dx\quad \textrm{and}\quad \phi_n(x):=(n-x)^{H-\frac{1}{2}} -(n-1-x)^{H-\frac{1}{2}}.$$ 
 \end{lemma}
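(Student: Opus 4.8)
The goal is to establish two-sided bounds on the kernel
$$j_n(i):=\sigma\, c_H \left(H-\frac12\right)\int\limits_{i-1}^{i}x^{\frac{1}{2}-H}\left(\int\limits_0^1 (v+n-1)^{H-\frac{1}{2}} (v+n-1-x)^{H-\frac{3}{2}}dv\right) dx$$
by $c_*\,(n-1)^{H-1/2}I_n(i)$ from below and $c_*\,n^{H-1/2}I_n(i)$ from above, where
$$I_n(i)=\int\limits_{i-1}^i x^{\frac{1}{2}-H}\left[(n-x)^{H-\frac12}-(n-1-x)^{H-\frac12}\right]dx.$$

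My plan is to compute the inner $v$-integral explicitly and then compare it with the factor appearing in $I_n(i)$. The key observation is that, since $H>1/2$, we have $H-1/2>0$ and $H-3/2<0$, so in the inner integrand the factor $(v+n-1)^{H-1/2}$ is increasing in $v$ on $[0,1]$ and hence is bounded between its endpoint values $(n-1)^{H-1/2}$ (at $v=0$) and $n^{H-1/2}$ (at $v=1$). The idea is therefore to pull this slowly-varying factor out of the inner integral using these crude bounds, which is exactly what produces the prefactors $(n-1)^{H-1/2}$ and $n^{H-1/2}$.

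Concretely, I would first write
\begin{equation*}
\int\limits_0^1 (v+n-1)^{H-\frac{1}{2}} (v+n-1-x)^{H-\frac{3}{2}}dv
\end{equation*}
and bound the first factor below by $(n-1)^{H-1/2}$ and above by $n^{H-1/2}$, both uniformly in $v\in[0,1]$. This leaves the integral $\int_0^1 (v+n-1-x)^{H-3/2}\,dv$, which can be evaluated by the fundamental theorem of calculus: since $\frac{d}{dv}\left[\frac{1}{H-1/2}(v+n-1-x)^{H-1/2}\right]=(v+n-1-x)^{H-3/2}$, one gets
\begin{equation*}
\int\limits_0^1 (v+n-1-x)^{H-\frac{3}{2}}dv=\frac{1}{H-\frac12}\left[(n-x)^{H-\frac12}-(n-1-x)^{H-\frac12}\right]=\frac{\phi_n(x)}{H-\frac12}.
\end{equation*}
The factor $H-1/2$ then cancels against the one in the definition of $j_n(i)$, and after multiplying through by $x^{1/2-H}$ and integrating over $x\in[i-1,i]$ one recognizes precisely $c_*\,(n-1)^{H-1/2}I_n(i)$ and $c_*\,n^{H-1/2}I_n(i)$ as the lower and upper bounds.

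The main points to verify carefully are the signs and the direction of the inequalities. One must check that the monotonicity of $(v+n-1)^{H-1/2}$ in $v$ is correct for $H>1/2$, and, crucially, that $\phi_n(x)=(n-x)^{H-1/2}-(n-1-x)^{H-1/2}\geq 0$ for the relevant range $x\in[i-1,i]\subset[0,n-1]$, so that the bounds on the pulled-out factor propagate in the expected direction after multiplication by the nonnegative quantity $x^{1/2-H}\phi_n(x)$ and integration; this positivity again follows from $H-1/2>0$ together with $n-x>n-1-x>0$. I do not expect any serious obstacle here: the whole argument is an elementary separation of the inner integral into a slowly-varying factor, bounded by its monotone endpoint values, times an exactly integrable factor. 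The only mild care needed is in tracking that $x^{1/2-H}\geq 0$ and that all bases of the powers are strictly positive on the domains of integration so that no sign reversal occurs.
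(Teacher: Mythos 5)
Your proof is correct: pulling the monotone factor $(v+n-1)^{H-1/2}$ out of the inner integral via its endpoint values and evaluating $\int_0^1 (v+n-1-x)^{H-3/2}\,dv = \phi_n(x)/(H-\tfrac12)$ exactly is the natural argument, and all sign/positivity checks (nonnegativity of $(v+n-1-x)^{H-3/2}$ and of $x^{1/2-H}$, integrability at the endpoints) go through as you state. Note that the paper itself omits the proof of this lemma, deferring to the reference \cite{CKP}; your derivation is precisely the standard one that reference relies on, so there is no substantive difference in approach.
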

 

\begin{lemma}\label{eg1}
 For all $1< n\leq N$, we have
 $$g\,\leq g_n\leq  g\,\left(1+\frac{1}{n-1}\right)^{H-\frac{1}{2}}\leq g\,2^{H-\frac{1}{2}},$$
 where $g:=\frac{\sigma c_H}{H+\frac12}$.
 In particular, $\lim_{n\rightarrow \infty}g_n=g.$
 \end{lemma}
 For the drift term $a_n^{(N)}$, it is straightforward from its definition and the continuity of the function $a$ that:
\begin{equation}\label{ea}
 |a_n^{(N)}|\leq \frac{{||a||}_\infty}{N},\qquad n\in\{1,...,N\}.
\end{equation}
This inequality together with Lemma \ref{eg1} indicates that, given the past $\vec{\xi}_{n-1}$, the contribution of $a_n^{(N)}$ is asymptotically neglictable with respect to the contribution of the last jump $\frac{g_n}{N^H}\,\xi_n$. In addition, since we are interested in asymptotic properties of the fractional binary markets, the problem can be simplified by studying the case without the drift. Therefore, we assume henceforth that $a_n^{(N)}=0$ for all $1\leq n\leq N$.


\section{1-step arbitrage opportunities under small transaction costs}\label{3}

In this section, we assume that each $N$-fractional binary market is subject to proportional transaction costs $\la_N$. We aim to show, using $1$-step $\la_N$-self-financing strategies, the existence of $\lambda_N$-arbitrage opportunities when $\la_N$ converges to $0$ fast enough. We do this in two steps. First we construct, on each $N$-fractional binary market, a $1$-step self-financing system $\Phi^N=\{\phi^N(\la)\}_{\la\in[0,1]}$. Next, we define $\la(\Phi^N)$ as in \eqref{laPhi}.
From its definition, we have that for any $\lambda\in[0,\lambda(\Phi^{N}))$, the self-financing strategy $\phi^{N}(\lambda)$ leads to a $\lambda$-arbitrage in the $N$-fractional binary market. Our problem reduces therefore to studying the asymptotic behavior of the quantity $\lambda(\Phi^{N})$.

To achieve our goal, we follow the same idea as Sottinen in \cite{Sotti}, who constructed 1-step arbitrage opportunities in the frictionless fractional binary markets. More precisely, Sottinen proves in \cite[Theorem 5]{Sotti} that there is $n_H\geq 1$ such that, for all $N\geq n_H$ and for any $n\in\{n_H,\dots,N\}$, 
\begin{equation}\label{albad}
u_{n}^{(N)}(-\vec{1}_{n-1})=\frac1{N^H}\left(-\sum_{i=1}^{n-1}j_n(i)+g_{n}\right)< 0.
\end{equation}
This means that, if the stock price takes jumps only down till time $n-1$, the price process will decrease from time $n-1$ to time $n$. Based on this result, the Sottinen's arbitrage opportunity is constructed in the following way. We chose a level $n_0\geq n_H$ and we don't do anything until time $n_0-2$. If the stock price took only steps down until time $n_0-1$, at that time we short-sell one unit of stock and at the next time we buy one unit of stock. Otherwise we don't do anything. In any case, starting with time $n_0+1$ we don't do anything. From \eqref{albad} it is straightforward to see that this self-financing strategy provides an arbitrage in the frictionless case.

Now, we introduce transaction costs $\lambda$ in the $N$-fractional binary market, we fix $n_0\in\{n_H,\dots,N\}$  and we construct our candidate for a $\la$-arbitrage opportunity $\phi^{N}(\la,n_0)=(\phi^{N,0}(\la,n_0),\phi^{N,1}(\la,n_0))$ as follows:
\begin{itemize}
\item For any time $1\leq i\leq n_0-2$ 
\begin{enumerate}
 \item[] $\phi_{i}^{N,0}(\la,n_0):=\phi_{i}^{N,1}(\la,n_0):=0.$
\end{enumerate}
\item At time $n_0-1$ we short-sell one unit of stock, in which case
\begin{enumerate}
\item[] $\phi_{n_0-1}^{N,0}(\la,n_0):=(1-\la)S^{(N)}_{n_0-1}\1_{\{\vec{\xi}_{n_0-1}=-\vec{1}_{n_0-1}\}},$
\item[] $\phi_{n_0-1}^{N,1}(\la,n_0):=-\1_{\{\vec{\xi}_{n_0-1}=-\vec{1}_{n_0-1}\}}.$
\end{enumerate}
\item At time $n_0$ we liquidate the position, which means buying one unit of stock. In this case 
\begin{enumerate}
\item[]$\phi_{n_0}^{N,0}(\la,n_0):=(1-\la)S^{(N)}_{n_0-1}\1_{\{\vec{\xi}_{n_0-1}=-\vec{1}_{n_0-1}\}}-S^{(N)}_{n_0}\1_{\{\vec{\xi}_{n_0-1}=-\vec{1}_{n_0-1}\}},$
\item[]$\phi_{n_0}^{N,1}(\la,n_0):=0.$
\end{enumerate}
\item After time $n_0$ we don't do anything, i.e. for any $n\in\{n_0+1,\ldots,N\}$
\begin{enumerate}
\item[]$\phi_n^{N,0}(\la,n_0):=\phi_{n_0}^{N,0}(\la,n_0),$
\item[]$\phi_{n}^{N,1}(\la,n_0):=\phi_{n_0}^{N,1}(\la,n_0)=0.$
\end{enumerate}

\end{itemize}
Note that the self-financing strategies given by $\{\phi^{N}(0,n_0)\}_{N\geq 1}$ correspond to the arbitrage strategies proposed by Sottinen in \cite{Sotti}. The next result extends this idea to the case of ``small'' transaction costs.

\begin{theorem}\label{Sot}
 If $\la_N=o\left(\frac1{\sqrt{N}}\right)$, then for all $N$ big enough, the $1$-step self-financing strategy $\phi^{N}(\lambda_N,N)$ leads to a $\la_N$-arbitrage in the $N$-fractional binary market.
\end{theorem}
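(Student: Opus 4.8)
The plan is to compute the terminal liquidation value of the proposed strategy $\phi^{N}(\la_N,N)$ explicitly, reduce the arbitrage property to a single scalar inequality, and then bound the relevant quantity from below using Lemma~\ref{ej1} and Lemma~\ref{eg1}.

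First I would evaluate $V_N^{\la_N}(\phi^{N}(\la_N,N))$. Off the event $A:=\{\vec{\xi}_{N-1}=-\vec{1}_{N-1}\}$ the strategy is identically zero, so $V_N^{\la_N}=0$ there. On $A$ we short one unit at time $N-1$, collecting $(1-\la_N)S_{N-1}^{(N)}$ in the bond, and repurchase at time $N$; since $\phi_N^{N,1}=0$ the liquidation value equals the bond holding,
$$V_N^{\la_N}(\phi^{N}(\la_N,N))=(1-\la_N)S_{N-1}^{(N)}-S_N^{(N)}=S_{N-1}^{(N)}\left(-\la_N-\frac{X_N}{N^H}\right)\qquad\text{on }A,$$
using $S_N^{(N)}=(1+X_N/N^H)S_{N-1}^{(N)}$ (recall $a_N^{(N)}=0$). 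On $A$ the past is frozen at $-\vec{1}_{N-1}$, so $X_N/N^H\in\{u_N^{(N)}(-\vec{1}_{N-1}),\,d_N^{(N)}(-\vec{1}_{N-1})\}$, and by \eqref{albad} both values are strictly negative with $X_N/N^H\le u_N^{(N)}(-\vec{1}_{N-1})<0$.

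Consequently $V_N^{\la_N}\ge S_{N-1}^{(N)}\big(-\la_N+|u_N^{(N)}(-\vec{1}_{N-1})|\big)$ on $A$ and vanishes on $A^c$, so the strategy is a $\la_N$-arbitrage precisely when $\la_N<|u_N^{(N)}(-\vec{1}_{N-1})|$: in that case $V_N^{\la_N}\ge0$ everywhere and is strictly positive on $A$, an event of probability $2^{-(N-1)}>0$. The self-financing property holds by construction, since the two trades at times $N-1$ and $N$ satisfy \eqref{selffin} with equality. Everything thus reduces to a lower bound on
$$|u_N^{(N)}(-\vec{1}_{N-1})|=\frac1{N^H}\Big(\sum_{i=1}^{N-1}j_N(i)-g_N\Big).$$

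The crux, and the main technical obstacle, is to show that this quantity is of order $1/\sqrt{N}$, i.e.\ bounded below by $c/\sqrt{N}$ for some $c>0$ and all large $N$. By Lemma~\ref{ej1}, $\sum_{i=1}^{N-1}j_N(i)\ge c_*(N-1)^{H-\frac12}\int_0^{N-1}x^{\frac12-H}\phi_N(x)\,dx$, so the task is to control the integral $\int_0^{N-1}x^{\frac12-H}\phi_N(x)\,dx$ with $\phi_N(x)=(N-x)^{H-\frac12}-(N-1-x)^{H-\frac12}$. Splitting it into two Beta-type pieces and rescaling ($x=Nt$ in the first, $x=(N-1)s$ in the second), I expect both to equal $(N+O(1))\,B(\tfrac32-H,H+\tfrac12)$ up to an error $O(N^{\frac12-H})=o(1)$, so that their difference converges to the positive constant $B(\tfrac32-H,H+\tfrac12)$. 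This yields $\sum_{i=1}^{N-1}j_N(i)\gtrsim N^{H-\frac12}$, hence $|u_N^{(N)}(-\vec{1}_{N-1})|\gtrsim N^{H-\frac12}/N^H=N^{-1/2}$, while by Lemma~\ref{eg1} the term $g_N/N^H$ is only $O(N^{-H})=o(N^{-1/2})$ and does not affect the order. Finally, since $\la_N=o(1/\sqrt{N})$ means $\sqrt{N}\,\la_N\to0$, for $N$ large we have $\la_N<c/\sqrt{N}\le|u_N^{(N)}(-\vec{1}_{N-1})|$, which is exactly the condition for $\phi^{N}(\la_N,N)$ to be a $\la_N$-arbitrage.
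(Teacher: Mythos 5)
Your proposal is correct and follows essentially the same route as the paper's proof: compute the terminal liquidation value on the event $\{\vec{\xi}_{N-1}=-\vec{1}_{N-1}\}$, reduce the arbitrage property to $\la_N<-u_N^{(N)}(-\vec{1}_{N-1})$, and bound $\sum_{i=1}^{N-1}j_N(i)\geq \tilde{c}\,N^{H-\frac12}$ via Lemma~\ref{ej1} and the same Beta-integral splitting with error $O(N^{\frac12-H})$, while Lemma~\ref{eg1} makes $g_N/N^H$ negligible. The only (immaterial) differences are that you work directly with $n_0=N$ rather than a general level $n_0$, and you use the full event of probability $2^{-(N-1)}$ for strict positivity where the paper uses the sub-event of probability $2^{-N}$.
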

\begin{proof}
As announced at the beginning of this section, if we define the self-financing system $\Phi^{N}(n_0):=\{\phi^{N}(\la,n_0)\}_{\la\in[0,1]}$, it is enough to show that 
 $$\lambda\left(\Phi^{N}(N)\right)\geq \frac{C}{\sqrt{N}},$$ 
 for some appropriate constant $C>0$. 

Note first that, for $n_0\geq n_H$, the value process of $\phi^{N}(\la,n_0)$ at maturity is given by 
$$V_N^{\la}\left(\phi^N(\la,n_0)\right)=(1-\la)S^{(N)}_{n_0-1}\1_{\{\vec{\xi}_{n_0-1}=-\vec{1}_{n_0-1}\}}-S^{(N)}_{n_0}\1_{\{\vec{\xi}_{n_0-1}=-\vec{1}_{n_0-1}\}}.$$

In order to have an arbitrage we need that 
$$V_N^{\la}\left(\phi^N(\la,n_0)\right)\geq 0\quad\textrm{a.s. and}\quad P\left(V_N^{\la}\left(\phi^N(\la,n_0)\right)>0\right)>0.$$ 
First, observe that
\begin{align*}
 V_N^{\la}\left(\phi^N(\la,n_0)\right)&=\1_{\{\vec{\xi}_{n_0-1}=-\vec{1}_{n_0-1}\}}\left((1-\la)S^{(N)}_{n_0-1}-S^{(N)}_{n_0}\right)\\
 & =S^{(N)}_{n_0-1}\1_{\{\vec{\xi}_{n_0-1}=-\vec{1}_{n_0-1}\}}\left(-\la-\frac{X_{n_0}(-\vec{1}_{n_0-1},\xi_{n_0})}{N^H}\right)\\
 & \geq S^{(N)}_{n_0-1}\1_{\{\vec{\xi}_{n_0-1}=-\vec{1}_{n_0-1}\}}\left(-\la-u_{n_0}^{(N)}(-\vec{1}_{n_0-1})\right),\\
\end{align*}
and then $V_N^{\la}\left(\phi^N(\la,n_0)\right)\geq0$ a.s. iff 
$$\la\leq-u_{n_0}^{(N)}(-\vec{1}_{n_0-1}).$$
Observe that the right-hand side is strictly positive by \eqref{albad}. Additionally, since $u_{n_0}^{(N)}(-\vec{1}_{n_0-1})>d_{n_0}^{(N)}(-\vec{1}_{n_0-1})$, for each $\la\leq -u_{n_0}^{(N)}(-\vec{1}_{n_0-1})$ we also have
\begin{equation}\label{prob}
P\left(V_N^{\la}\left(\phi^N(\la,n_0)\right)>0\right)\geq P\left(V_N^{\la}\left(\phi^N(\la,n_0)\right)\1_{\{\vec{\xi}_{n_0}=-\vec{1}_{n_0}\}}>0\right)=\frac1{2^{n_0}}>0.
\end{equation}
Therefore, we have
\begin{align}\label{laphin}
 \la\left(\Phi^{N}(n_0)\right)=-u_{n_0}^{(N)}(-\vec{1}_{n_0-1})=\frac1{N^H}\left(\sum_{i=1}^{n_0-1}j_{n_0}(i)-g_{n_0}\right).
\end{align}
Using Lemma \ref{ej1}, we see that
$$\sum_{i=1}^{n_0-1}j_{n_0}(i)\geq c_* (n_0-1)^{H-\frac{1}{2}}\int\limits_0^{n_0-1}x^{\frac{1}{2}-H}\phi_{n_0}(x)dx.$$
In addition, we have that
 $$\int\limits_{0}^{n_0-1} x^{\frac{1}{2}-H} (n_0-x)^{H-\frac{1}{2}} dx= n_0\int\limits_{0}^{\frac{n_0-1}{n_0}} u^{\frac{1}{2}-H} (1-u)^{H-\frac{1}{2}} du,$$
 and 
 $$\int\limits_{0}^{n_0-1} x^{\frac{1}{2}-H} (n_0-1-x)^{H-\frac{1}{2}} dx= (n_0-1)\int\limits_{0}^{1} u^{\frac{1}{2}-H} (1-u)^{H-\frac{1}{2}} du.$$
Thus, we obtain:
\begin{equation}\label{integ}
 \int\limits_{0}^{n_0-1} x^{\frac{1}{2}-H}\phi_{n_0}(x)dx=\int\limits_{0}^{1} u^{\frac{1}{2}-H} (1-u)^{H-\frac{1}{2}} du- n_0 \int\limits_{1-\frac{1}{n_0}}^{1} u^{\frac{1}{2}-H} (1-u)^{H-\frac{1}{2}} du.
 \end{equation}
Moreover, it is straightforward to show that there is a constant $\hat{c}>0$ such that
\begin{equation}\label{ineq}
0<n_0 \int\limits_{1-\frac{1}{n_0}}^{1} u^{\frac{1}{2}-H} (1-u)^{H-\frac{1}{2}} du \leq \frac{\hat{c}}{n_0^{H-\frac{1}{2}}}.
\end{equation}
Thus, for $n_0$ big enough and an appropriate constant $\tilde{c}>0$, we have that
$$\sum_{i=1}^{n_0-1}j_{n_0}(i)\geq \tilde{c}\, n_0^{H-\frac{1}{2}}.$$
Going back to \eqref{laphin} and using the estimates given in Lemma \ref{eg1}, we obtain:
\begin{align}\label{lbctstr}
  \la\left(\Phi^{N}(n_0)\right)\geq\hat{c}_* \frac{n_0^{H-\frac{1}{2}}}{N^H},
\end{align}
for some appropriate constant $\hat{c}_*>0$ and $n_0\geq n_H$ big enough. One can see from \eqref{lbctstr} that the bigger we choose $n_0$,  the better the lower bound becomes. In particular, when $n_0=N$, we have that
$$\la\left(\Phi^{N}(N)\right)\geq\frac{\hat{c}_*}{\sqrt{N}}.$$
The proof is now complete.
\end{proof}


\section{Asymptotic behavior of the critical transaction costs}\label{4}
The goal of this section is to study the asymptotic behavior of the critical transaction costs associated to the fractional binary markets.
More precisely, on each $N$-fractional binary market, we define $\la_c^{(N)}$ as in \eqref{lac}, i.e.
\begin{equation}\label{la}
\lambda_c^{(N)}=\inf\{\lambda\in[0,1]: \nexists\textrm{ $\la$-arbitrage in the $N$-fractional binary market}\},
\end{equation}
and we study the limit of $\lambda_c^{(N)}$ when $N$ tends to infinity.
Since the arbitrage opportunities disappear when we introduce arbitrarily small transaction costs in the fractional Black Scholes, meaning that the corresponding critical transaction cost is $0$, one can expect that also $\la_c^{(N)}\to0$ as $N\to\infty$. However, a completely opposite behavior will be observed.

Note that, from Remark \ref{ilplc} we have, for any self-financing system $\Phi^N$, that
\begin{equation}\label{ctc}
\la(\Phi^{N})\leq\la_c^{(N)}.
\end{equation}
Therefore, one way to achieve our goal, at least partially, would be to construct appropriate self-financing systems.
\begin{remark}
If we apply \eqref{ctc} to the self-financing system defined in the proof of Theorem \ref{Sot}, $\Phi^N(N)$, we deduce that
\begin{equation}\label{lbct}
\la_c^{(N)}\geq \frac{C}{\sqrt{N}},
\end{equation}
where $C>0$ is an appropriate constant. This result can be derived also using the lower bound \eqref{lowbd}. Indeed, the lower bound given in \eqref{lowbd} together with the estimations presented in Section~\ref{est} for the involved quantities lead us exactly to \eqref{lbct}. However, the advantage of the method given in Section \ref{3} is that explicit 1-step arbitrage opportunities under transaction costs are provided.
\end{remark}

Now, we proceed to the construction of a new sequence of self-financing systems $\{\Psi^N\}_{N\geq 1}$. Let's first go back to the frictionless case. Fix $\gamma\in(0,1)$ and assume that the price process takes jumps only down till time $\lfloor\gamma N\rfloor$. If $\lfloor\gamma N\rfloor\geq n_H$, we know from \eqref{albad} that the stock price process will decrease from time $\lfloor\gamma N\rfloor$ to time $\lfloor\gamma N\rfloor+1$. This is the idea behind the Sottinen's arbitrage strategy. However, we would like to do something better. We would like to prove that we can choose $k_N$ with $k_N\to\infty$ ensuring that the stock price will decrease until time $\lfloor\gamma N\rfloor+k_N$. If this is possible, we can construct an arbitrage by going short in one unit of stock at time $\lfloor\gamma N\rfloor$ and buying one unit at time $\lfloor\gamma N\rfloor+k_N$. Hence, we want to choose $k_N$ such that
\begin{align*}
u_{\lfloor\gamma N\rfloor+k}^{(N)}\left(-\vec{1}_{\lfloor\gamma N\rfloor},\vec{x}_{k-1}\right)=\frac{\Ys_{\lfloor\gamma N\rfloor+k}(-\vec{1}_{\lfloor\gamma N\rfloor},\vec{x}_{k-1})+g_{\lfloor\gamma N\rfloor+k}}{N^H}\leq\,0,
\end{align*}
for all $k\leq k_N$ and $\vec{x}_{k-1}\in\{-1,1\}^{k-1}$. Equivalently,
\begin{align*}
u_{\lfloor\gamma N\rfloor+k}^{(N)}(-\vec{1}_{\lfloor\gamma N\rfloor},\vec{1}_{k-1})=\frac{\Ys_{\lfloor\gamma N\rfloor+k}(-\vec{1}_{\lfloor\gamma N\rfloor},\vec{1}_{k-1})+g_{\lfloor\gamma N\rfloor+k}}{N^H}\leq\,0
\end{align*}
for any $k\leq k_N$. This is again equivalent to
\begin{align}\label{acres}
 A_{\gamma}^N(k):&=\Ys_{\lfloor\gamma N\rfloor+k}(-\vec{1}_{\lfloor\gamma N\rfloor},\vec{1}_{k-1})+g_{\lfloor\gamma N\rfloor+k}\nonumber\\
 &=-\sum_{i=1}^{\lfloor\gamma N\rfloor}j_{\lfloor\gamma N\rfloor+k}(i)+\sum_{i=\lfloor\gamma N\rfloor+1}^{\lfloor\gamma N\rfloor+k-1}j_{\lfloor\gamma N\rfloor+k}(i)+g_{\lfloor\gamma N\rfloor+k}\leq0,
\end{align}
for any $k\leq k_N$.

The next result tells us how $k_N$ has to be chosen in order for equation \eqref{acres} to hold true.

\begin{lemma}\label{arbi}
 For all $\gamma\in(0,1)$, there exist $P_{\gamma}\in(0,1-\gamma]$, $C_{\gamma},\widehat{C}_{\gamma}>0$ and $N_0^\gamma\in \Nb$ such that for all $N\geq N_0^{\gamma}$ and all $1\leq k\leq \lfloor P_{\gamma}N\rfloor$ the following holds:
 \begin{equation}\label{acres1}
  A_{\gamma}^{(N)}(k)\leq-C_{\gamma}N^{H-\frac{1}{2}}+\widehat{C}_{\gamma}\leq0.
 \end{equation}
\end{lemma}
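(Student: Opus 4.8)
The plan is to bound $A_\gamma^{(N)}(k)$ from above by an expression whose leading term is a strictly negative multiple of $N^{H-\frac12}$, while the contribution of $g_{\gn+k}$ stays uniformly bounded. Throughout I write $m=\gn$ and $n=m+k$, so that
$$A_\gamma^{(N)}(k)=-\sum_{i=1}^{m}j_n(i)+\sum_{i=m+1}^{n-1}j_n(i)+g_n.$$
To control the two sums I would apply Lemma~\ref{ej1}, bounding the subtracted first sum from below and the second sum from above, which gives
$$A_\gamma^{(N)}(k)\le -c_*(n-1)^{H-\frac12}\int_0^{m}x^{\frac12-H}\phi_n(x)\,dx+c_*\,n^{H-\frac12}\int_m^{n-1}x^{\frac12-H}\phi_n(x)\,dx+g_n.$$
For the last term, Lemma~\ref{eg1} supplies the uniform bound $g_n\le g\,2^{H-\frac12}=:\widehat C_\gamma$, so this is a candidate for $\widehat C_\gamma$ in the statement.

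Next I would compute the two integrals in closed form. Using the scaling substitutions $x=nu$ and $x=(n-1)u$ (exactly as in the derivation of \eqref{integ}) and writing $F(t):=\int_0^t u^{\frac12-H}(1-u)^{H-\frac12}\,du$, one obtains
$$\int_0^{m}x^{\frac12-H}\phi_n(x)\,dx=nF\!\left(\tfrac{m}{n}\right)-(n-1)F\!\left(\tfrac{m}{n-1}\right),\qquad \int_0^{n-1}x^{\frac12-H}\phi_n(x)\,dx=nF\!\left(\tfrac{n-1}{n}\right)-(n-1)F(1).$$
Since $F$ is increasing, the second identity already yields $\int_0^{n-1}\le F(1)$, hence $\int_m^{n-1}\le F(1)-\int_0^{m}$. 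For the first integral I would apply the mean value theorem to $s\mapsto sF(m/s)$ on $[n-1,n]$: its derivative equals $G(m/s)$, where $G(r):=F(r)-r^{\frac32-H}(1-r)^{H-\frac12}$, and a direct computation gives $G'(r)=(H-\tfrac12)\,r^{\frac12-H}(1-r)^{H-\frac32}>0$ on $(0,1)$. Thus $\int_0^{m}x^{\frac12-H}\phi_n(x)\,dx=G(m/\xi)\ge G(m/n)$ for some $\xi\in(n-1,n)$, giving the clean lower bound $\int_0^{m}\ge G(r_1)$ with $r_1:=m/n$.

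Substituting these estimates and extracting the factor $c_*n^{H-\frac12}$, the bracketed quantity becomes $F(1)-G(r_1)-\bigl(\tfrac{n-1}{n}\bigr)^{H-\frac12}G(r_1)$, which tends to $F(1)-2G(r_1)$. Because $G$ increases continuously from $0$ to $F(1)$ on $[0,1]$, there is a unique $r^\ast\in(0,1)$ with $G(r^\ast)=F(1)/2$, and $F(1)-2G(r)<0$ precisely for $r>r^\ast$. I would then fix $P_\gamma:=\min\{1-\gamma,\ \gamma(1-r^\ast)/(2r^\ast)\}\in(0,1-\gamma]$, which forces $r_1=\gn/(\gn+k)\ge \gn/(\gn+\lfloor P_\gamma N\rfloor)$ to stay bounded away from $r^\ast$ uniformly in $1\le k\le\lfloor P_\gamma N\rfloor$ and in large $N$, so that $G(r_1)\ge F(1)/2+\delta_0$ for some $\delta_0>0$; combined with $\bigl(\tfrac{n-1}{n}\bigr)^{H-\frac12}\to1$, the bracket is $\le-\delta_0$ for all $N\ge N_0^\gamma$. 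Since $n\ge m\ge \gamma N/2$ for $N$ large, this yields $A_\gamma^{(N)}(k)\le -C_\gamma N^{H-\frac12}+\widehat C_\gamma$ with $C_\gamma:=c_*\delta_0(\gamma/2)^{H-\frac12}$, and enlarging $N_0^\gamma$ so that $C_\gamma N^{H-\frac12}\ge\widehat C_\gamma$ makes the right-hand side nonpositive.

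The main obstacle is obtaining all of this uniformly in $k$ across the full range $1\le k\le\lfloor P_\gamma N\rfloor$: the passage from sums to integrals and the control of $G(r_1)$ must hold simultaneously for $k$ of constant order (where $r_1\to1$ and $G'$ blows up) and for $k$ proportional to $N$ (where $r_1$ approaches its minimum $\gamma/(\gamma+P_\gamma)$). The monotonicity of $G$ is precisely what makes this tractable, since it converts the delicate asymptotics of $\int_0^{m}$ into the single inequality $\int_0^{m}\ge G(m/n)$, valid for every admissible $k$ with no case distinction; verifying the sign of $G'$ and the strict inequality $\gamma/(\gamma+P_\gamma)>r^\ast$ is then the crux of the argument.
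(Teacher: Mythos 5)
Your proof is correct, and it reaches the bound by a route that differs from the paper's in one substantive step. The shared skeleton is the same: Lemma~\ref{ej1} converts both sums into integrals, the scaling substitution produces the incomplete Beta function (your $F$ is the paper's $I$), and the heart of the argument is the strict monotonicity of the same auxiliary function (your $G(r)=F(r)-r^{\frac32-H}(1-r)^{H-\frac12}$ is exactly the paper's $G(z)=I(z)-(1-z)^{\al}z^{1-\al}$; the paper obtains the lower bound on $\int_0^{\gn}x^{\frac12-H}\phi_{\gn+k}(x)\,dx$ by writing it as an integral of $G$ over an interval of length $1/N$, which is your mean-value-theorem step in integral form). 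The divergence is in how the two sums are played against each other. The paper keeps only the $k$-independent lower bound $\int_0^{\gn}x^{\frac12-H}\phi_{\gn+k}(x)\,dx\ge G(\gamma/2)$ and estimates the middle sum by a second, direct computation: it bounds $x^{\frac12-H}\le\gn^{\frac12-H}$ on $[\gn,\gn+k-1]$ and integrates $\phi_{\gn+k}$ explicitly, getting a term of order $(k/N)^{\al}N^{\al}$; negativity is then an order comparison, which yields the fully explicit threshold $P_\gamma=(1-\gamma)\wedge\left[(\gamma/2)^{2\al}G(\gamma/2)/2\right]^{\frac1{\al}}$. You instead retain the sharper, $k$-dependent bound $\int_0^{\gn}x^{\frac12-H}\phi_{\gn+k}(x)\,dx\ge G(r_1)$ with $r_1=\gn/(\gn+k)$, and recycle it through the complementary-integral inequality $\int_{\gn}^{n-1}x^{\frac12-H}\phi_n(x)\,dx\le F(1)-\int_0^{\gn}x^{\frac12-H}\phi_n(x)\,dx$, so no second integral estimate is needed; negativity becomes the single condition $G(r_1)>F(1)/2$, enforced uniformly by your choice $P_\gamma=\min\{1-\gamma,\gamma(1-r^\ast)/(2r^\ast)\}$, which indeed keeps $\gamma/(\gamma+P_\gamma)\ge 2r^\ast/(1+r^\ast)>r^\ast$. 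What each approach buys: the paper gets explicit constants $C_\gamma$, $\widehat C_\gamma$, $P_\gamma$ in terms of $G(\gamma/2)$, while yours is structurally cleaner (one integral identity reused twice) at the price of a threshold $r^\ast$ defined only implicitly as the root of $G(r)=F(1)/2$. Both arguments deliver the bound uniformly over $1\le k\le\lfloor P_\gamma N\rfloor$, which is the crux of the lemma.
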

\begin{proof}
First, we denote $\al:=H-\frac{1}{2}\in(0,\frac{1}{2})$, $\gamma_N:=\frac{\gn}{N}$ and we choose $n_\gamma>1$ such that, for all $N\geq n_\gamma$, we have $\gamma_N>\gamma/2$.

Now, we proceed to obtain a lower bound for the first term in \eqref{acres}. From Lemma \ref{ej1} we have, for all $1\leq k\leq N-\gn$, that
\begin{align*}
 \sum_{i=1}^{\gn}j_{\gn+k}(i)&\geq c_*\left(\gn+k-1\right)^{\al}\int\limits_0^{\gn}x^{-\al}\phi_{\gn+k}(x)dx.
 \end{align*}
After a change of variables, and denoting $I(x):=\int_0^x v^{-\al}(1-v)^{\al}dv$, we obtain that
 \begin{align}\label{paft}
 \int\limits_0^{\gn}&x^{-\al}\phi_{\gn+k}(x)dx=\nonumber\\
 &\qquad=\left(\gn+k\right)\,I\left(\frac{\gn}{\gn+k}\right)-\left(\gn+k-1\right)\,I\left(\frac{\gn}{\gn+k-1}\right)\nonumber\\
 &\qquad=N\left[\left(\gamma_N+\frac{k}{N}\right)\,I\left(\frac{\gamma_{N}}{\gamma_{N}+\frac{k}{N}}\right)-\left(\gamma_{N}+\frac{k}{N}-\frac1{N}\right)\,I\left(\frac{\gamma_{N}}{\gamma_{N}+\frac{k}{N}-\frac1{N}}\right)\right]\nonumber\\
 &\qquad=N\,F_{\frac{k}{N}}\left(\frac1{N}\right),
\end{align}
where, for $y\in(0,1)$, the function $F_y:(0,y)\rightarrow \Rb$ is given by:
\begin{equation*}
 F_y(h):=(\gamma_{N}+y)\,I\left(\frac{\gamma_{N}}{\gamma_{N}+y}\right)-(\gamma_{N}+y-h)\,I\left(\frac{\gamma_{N}}{\gamma_{N}+y-h}\right).
 \end{equation*}
 Note that
 \begin{equation}\label{fpeg}
 F'_y(h)=G\left(\frac{\gamma_{N}}{\gamma_{N}+y-h}\right), 
 \end{equation}
where $G:(0,1)\rightarrow \Rb$ is the function defined by $G(z):=I(z)-(1-z)^{\al}z^{1-\al}$. One can easily check that $G$ is strictly increasing in $(0,1)$. Consequently, we have, for all $N\geq n_\gamma$ and $y\leq 1-\gamma_N$, that
\begin{equation}\label{gmg}
G\left(\frac{\gamma_{N}}{\gamma_{N}+y-h}\right)\geq G(\gamma_{N})> G(\gamma/2)>G(0+)=0. 
\end{equation}
Plugging \eqref{fpeg} and \eqref{gmg} in \eqref{paft}, and using that $F_{\frac{k}{N}}(0)=0$, we obtain
\begin{align*}
 \int\limits_0^{\gn}x^{-\al}\phi_{\gn+k}(x)dx&=N\left[F_{\frac{k}{N}}\left(\frac1{N}\right)-F_{\frac{k}{N}}(0)\right]=N\int\limits_0^{\frac1{N}}F'_{\frac{k}{N}}(h)dh\\
 &> N\int\limits_0^{\frac1{N}}G(\gamma/2)dh=G(\gamma/2)>0.
\end{align*}
Returning to our initial inequality, we get, for all $N\geq n_\gamma$ and $k\leq N-\gn$, that
\begin{equation}\label{sum1}
 \sum_{i=1}^{\gn}j_{\gn+k}(i)\geq c_*(\gn+k-1)^{\al}G(\gamma/2)\geq c_*\,G(\gamma/2)\,\gn^{\al}.
\end{equation}
For the second sum in \eqref{acres}, we again use the estimates given in Lemma \ref{ej1} and after an appropriate change of variables we deduce that
\begin{align}\label{sum2}
 \sum_{i=\gn+1}^{\gn+k-1}j_{\gn+k}(i)&\leq c_*(\gn+k)^{\al}\int\limits_{\gn}^{\gn+k-1}x^{-\al}\phi_{\gn+k}(x)dx\nonumber\\
 &\leq c_*\frac{(\gn+k)^{\al}}{\gn^{\al}}\int\limits_{\gn}^{\gn+k-1}\phi_{\gn+k}(x)dx\nonumber\\
 &= c_*\frac{(\gn+k)^{\al}}{\gn^{\al}}\frac{[k^{\al+1}-1-(k-1)^{\al+1}]}{\al+1}\nonumber\\
 &\leq c_*\frac{(\gn+k)^{\al}}{\gn^{\al}}k^{\al}.
\end{align}
Using \eqref{sum1}, \eqref{sum2} and the upper bound for $g_{\gn+k}$ given in Lemma \ref{eg1}, we obtain, for all $N\geq n_\gamma$ and $1\leq k\leq N-\gn$, that 
\begin{align*}
 A_{\gamma}^N(k)&\leq -c_*\gn^{\al}G(\gamma/2)+c_*\frac{(\gn+k)^{\al}}{\gn^{\al}}k^{\al}+\frac{c_*}{\al+1}\frac{(\gn+k)^{\al}}{(\gn+k-1)^{\al}}\\
 &\leq c_*\left[-\gn^{\al}G(\gamma/2)+\gamma_N^{-\al}k^{\al}+\frac1{\al+1} \gamma_N^{-\al}\right]\\ 
 &\leq c_*\left[-N^{\al}\left((\gamma/2)^{\al}G(\gamma/2)-(\gamma/2)^{-\al}\left(\frac {k}{N}\right)^{\al}\right)+\frac1{\al+1}(\gamma/2)^{-\al}\right].
\end{align*}
Define $\widehat{C}_{\gamma}:=c_*(\gamma/2)^{-\al}/(\al+1)$. It is now clear that \eqref{acres1} is satisfied if $k\leq N-\gn$ and, for some $C_\gamma>0$, we have
$$(\gamma/2)^{\al}G(\gamma/2)-(\gamma/2)^{-\al}\left(\frac {k}{N}\right)^{\al}\geq \frac{C_\gamma}{c_*}.$$
If we choose $C_\gamma:=c_*(\gamma/2)^{\al}G(\gamma/2)/2$, the previous condition, can be written as
$$k\leq\,\left[\frac{(\gamma/2)^{2\al}G(\gamma/2)}{2}\right]^{\frac1{\al}}\,N.$$
Therefore, it is enough to define 
$$P_\gamma:=(1-\gamma)\wedge\left[\frac{(\gamma/2)^{2\al}G(\gamma/2)}{2}\right]^{\frac1{\al}}\quad\textrm{and}\quad N_0^\gamma:=n_\gamma\vee\left(\left\lfloor\left(\frac{\widehat{C}_\gamma}{C_\gamma}\right)^{\frac1{\alpha}}\right\rfloor+1\right),$$
to finish the proof.
\end{proof}

The above result tells us how long the stock price process will decrease starting from time $\lfloor\gamma N\rfloor$, given that until that time the price only jumped down. In what follows, using this knowledge, we construct a sequence of self-financing systems leading to good lower bounds for the critical transaction costs. 

Consider, for each $\la\in[0,1]$ and $N\geq 1$, the following $\la$-self-financing strategy $\psi^{N}(\la)=(\psi^{N,0}(\la),\psi^{N,1}(\la))$:
\begin{itemize}
 \item Before time $\lfloor\gamma N\rfloor$ we don't do anything, i.e. for any $i\in\{1,...,\lfloor\gamma N\rfloor-1\}$
 \begin{enumerate}
  \item[] $\psi_{i}^{N,0}(\la):=\psi_{i}^{N,1}(\la):=0$.
 \end{enumerate}
 \item  At time $\lfloor\gamma N\rfloor$ we short-sell one unit of stock, in which case 
 \begin{enumerate}
  \item[]$\psi_{\lfloor\gamma N\rfloor}^{N,0}(\la):=(1-\la)S^{(N)}_{\lfloor\gamma N\rfloor}\1_{\{\vec{\xi}_{\lfloor\gamma N\rfloor}=-\vec{1}_{\lfloor\gamma N\rfloor}\}},$
  \item[]$\psi_{\lfloor\gamma N\rfloor}^{N,1}(\la):=-\1_{\{\vec{\xi}_{\lfloor\gamma N\rfloor}=-\vec{1}_{\lfloor\gamma N\rfloor}\}}.$
 \end{enumerate}
\item Starting with $\lfloor\gamma N\rfloor+1$ we let the price evolve, and we don't do anything till $\lfloor\gamma N\rfloor+\lfloor P_\gamma N\rfloor$ when we liquidate the position, which means buying one unit of stock. In this case
\begin{enumerate}
  \item[]$\psi_{\lfloor\gamma N\rfloor+\lfloor P_\gamma N\rfloor}^{N,0}(\la):=\left((1-\la)S^{(N)}_{\lfloor\gamma N\rfloor}-S^{(N)}_{\lfloor\gamma N\rfloor+\lfloor P_\gamma N\rfloor}\right)\1_{\{\vec{\xi}_{\lfloor\gamma N\rfloor}=-\vec{1}_{\lfloor\gamma N\rfloor}\}},$
  \item[]$\psi_{\lfloor\gamma N\rfloor+\lfloor P_\gamma N\rfloor}^{N,1}(\la):=0.$
 \end{enumerate}
 \item Starting from time $\lfloor\gamma N\rfloor+\lfloor P_\gamma N\rfloor+1$ we don't do anything again, i.e. for all $i\in\{\lfloor\gamma N\rfloor+\lfloor P_\gamma N\rfloor+1,..,N\}$
 \begin{enumerate}
  \item[] $\psi_{i}^{N,0}(\la):=\psi_{\lfloor\gamma N\rfloor+\lfloor P_\gamma N\rfloor}^{N,0}(\la),$
  \item[] $\psi_{i}^{N,1}(\la):=\psi_{\lfloor\gamma N\rfloor+\lfloor P_\gamma N\rfloor}^{N,1}(\la)=0.$
 \end{enumerate}
\end{itemize}
Using the previous self-financing strategies, we define, on each $N$-fractional binary market, the self-financing system $\Psi^N:=\{\psi^N(\la)\}_{\la\in[0,1]}$. With the help of these self-financing systems, we obtain the following characterization of the asymptotic behavior of the critical transaction costs.
\begin{theorem}\label{lac1}
 There exists a constant $C_{\gamma}^*>0$ such that for $N$ big enough
 $$\la_c^{(N)}\geq 1-e^{-C_{\gamma}^* \sqrt{N}}.$$
 In particular, we have that 
 $$\lim_{N\rightarrow\infty}\lambda_c^{(N)}=1.$$ 
\end{theorem}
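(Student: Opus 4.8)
The plan is to use the inequality $\la(\Psi^N)\le\la_c^{(N)}$ from \eqref{ctc}, so that it suffices to bound $\la(\Psi^N)$ from below by $1-e^{-C_{\gamma}^*\sqrt N}$. First I would write down the liquidation value of $\psi^N(\la)$ at maturity. Since nothing is traded outside the window $[\gn,\gn+\lfloor P_\gamma N\rfloor]$ and a position is opened only on the event $\{\vec\xi_{\gn}=-\vec1_{\gn}\}$, using $a_n^{(N)}=0$ and $S^{(N)}_{\gn+\lfloor P_\gamma N\rfloor}=S^{(N)}_{\gn}\prod_{k=1}^{\lfloor P_\gamma N\rfloor}(1+X_{\gn+k}/N^H)$, one obtains
\[
V_N^\la\bigl(\psi^N(\la)\bigr)=\1_{\{\vec\xi_{\gn}=-\vec1_{\gn}\}}\,S^{(N)}_{\gn}\left((1-\la)-\prod_{k=1}^{\lfloor P_\gamma N\rfloor}\Bigl(1+\frac{X_{\gn+k}}{N^H}\Bigr)\right).
\]

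The a.s.\ nonnegativity of $V_N^\la$ is governed by the \emph{largest} value the random product can take once the down-run $\{\vec\xi_{\gn}=-\vec1_{\gn}\}$ has occurred. The key observation is that each factor $1+X_{\gn+k}/N^H$ is a nondecreasing function of every continuation sign $\xi_{\gn+1},\dots,\xi_{\gn+k}$, because the coefficients $j_{\gn+k}(\cdot)$ and $g_{\gn+k}$ are strictly positive (Lemmas \ref{ej1} and \ref{eg1}); as all factors are positive for $N$ large, the product is maximized by the all-up continuation $\vec1$. Along that path the $k$-th factor equals $1+A_\gamma^N(k)/N^H$, with $A_\gamma^N(k)$ exactly the quantity introduced in \eqref{acres} and estimated in Lemma \ref{arbi}. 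Hence the maximal value of the product is $M_N:=\prod_{k=1}^{\lfloor P_\gamma N\rfloor}\bigl(1+A_\gamma^N(k)/N^H\bigr)$, and I would check that for every $\la<1-M_N$ one has $V_N^\la(\psi^N(\la))\ge0$ a.s.\ while $V_N^\la>0$ on $\{\vec\xi_{\gn}=-\vec1_{\gn}\}$, an event of probability $2^{-\gn}>0$. This gives $\la(\Psi^N)\ge 1-M_N$, and therefore $\la_c^{(N)}\ge 1-M_N$.

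It then remains to turn $M_N$ into the announced exponential bound. By Lemma \ref{arbi}, for $N\ge N_0^\gamma$ and $1\le k\le\lfloor P_\gamma N\rfloor$ we have $A_\gamma^N(k)\le -C_\gamma N^{H-1/2}+\widehat C_\gamma$, so each factor satisfies
\[
1+\frac{A_\gamma^N(k)}{N^H}\le 1-\frac{C_\gamma}{\sqrt N}+\frac{\widehat C_\gamma}{N^H}\le 1-\frac{C_\gamma}{2\sqrt N},
\]
the last inequality holding for $N$ large since $H>1/2$ forces $\widehat C_\gamma/N^H=o(1/\sqrt N)$. Using $1-x\le e^{-x}$ together with $\lfloor P_\gamma N\rfloor\ge\frac12 P_\gamma N$ yields
\[
M_N\le\Bigl(1-\frac{C_\gamma}{2\sqrt N}\Bigr)^{\lfloor P_\gamma N\rfloor}\le\exp\Bigl(-\frac{C_\gamma P_\gamma}{4}\sqrt N\Bigr),
\]
so that $\la_c^{(N)}\ge 1-e^{-C_{\gamma}^*\sqrt N}$ with $C_{\gamma}^*:=C_\gamma P_\gamma/4$. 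Letting $N\to\infty$ and recalling $\la_c^{(N)}\le1$ gives $\lim_{N\to\infty}\la_c^{(N)}=1$.

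The main obstacle is the reduction carried out in the second step: one must argue rigorously that, among all continuations of the down-run, the product of the price factors is maximized by the all-up path, so that the pathwise constraint $V_N^\la\ge0$ is controlled by precisely the quantity $A_\gamma^N(k)$ treated in Lemma \ref{arbi}. This rests on the strict positivity of all the $j_{\gn+k}(i)$ and $g_{\gn+k}$ and on the positivity of the factors for large $N$; once this monotonicity is established, the remaining work is the routine exponential estimate above.
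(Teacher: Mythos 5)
Your proposal is correct and follows essentially the same route as the paper: the same self-financing system $\Psi^N$, the same reduction of the a.s.\ nonnegativity constraint to the all-up continuation (so that the product of price factors is controlled by $A_\gamma^N(k)$ and Lemma \ref{arbi}), the inequality $\la_c^{(N)}\geq\la(\Psi^N)$ from \eqref{ctc}, and the same exponential estimate on $\prod_{k=1}^{\lfloor P_\gamma N\rfloor}\bigl(1+A_\gamma^N(k)/N^H\bigr)$. The monotonicity step you flag as the main obstacle is exactly the inequality the paper uses (justified by the positivity of the $j_n(i)$, $g_n$ and of the price factors), the only cosmetic difference being that the paper computes $\la(\Psi^N)$ exactly while you only need the lower bound.
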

\begin{proof}
We start by looking, for each $\la\in[0,1]$ and $N\geq 1$, at the value process at maturity of the trading strategy $\psi^{N}(\la)$. Note that, if we set, for $j\geq\lfloor\gamma N\rfloor$, $\vec{\xi}_j^N=(\xi_{\lfloor\gamma N\rfloor+1},\ldots,\xi_{j})$ and $\vec{1}_j^N=\vec{1}_{j-\lfloor\gamma N\rfloor}$ (with the convention $\vec{\xi}_{\lfloor\gamma N\rfloor}^N=\vec{1}_{\lfloor\gamma N\rfloor}^N=\emptyset$),  we obtain
\begin{align*}
& V_N^{\la}(\psi^{N}(\la))=\1_{\{\vec{\xi}_{\lfloor\gamma N\rfloor}=-\vec{1}_{\lfloor\gamma N\rfloor}\}}\left[(1-\la)S^{(N)}_{\lfloor\gamma N\rfloor}-S^{(N)}_{\lfloor\gamma N\rfloor+\lfloor P_\gamma N\rfloor}\right]\\
 &=\1_{\{\vec{\xi}_{\lfloor\gamma N\rfloor}=-\vec{1}_{\lfloor\gamma N\rfloor}\}}S^{(N)}_{\lfloor\gamma N\rfloor}\left[1-\la-\prod_{j=\lfloor\gamma N\rfloor+1}^{\lfloor\gamma N\rfloor+\lfloor P_\gamma N\rfloor}\left(1+\frac{\Ys_j(-\vec{1}_{\gn},\vec{\xi}_{j-1}^N)+g_j\xi_j}{N^H}\right)\right]\\
&\geq\1_{\{\vec{\xi}_{\lfloor\gamma N\rfloor}=-\vec{1}_{\lfloor\gamma N\rfloor}\}}S^{(N)}_{\lfloor\gamma N\rfloor}\left[1-\la-\prod_{j=\lfloor\gamma N\rfloor+1}^{\lfloor\gamma N\rfloor+\lfloor P_\gamma N\rfloor}\left(1+\frac{\Ys_j(-\vec{1}_{\gn},\vec{1}_{j-1}^N)+g_j}{N^H}\right)\right]\\
 &=\1_{\{\vec{\xi}_{\lfloor\gamma N\rfloor}=-\vec{1}_{\lfloor\gamma N\rfloor}\}}S^{(N)}_{\lfloor\gamma N\rfloor}\left[1-\la-\prod_{k=1}^{\lfloor P_\gamma N\rfloor}\left(1+\frac{A_{\gamma}^{N}(k)}{N^H}\right)\right].
\end{align*}
The previous inequality is an equality if and only if 
$$\vec{\xi}_{\lfloor\gamma N\rfloor}\neq-\vec{1}_{\lfloor\gamma N\rfloor}\quad\textrm{or}\quad\vec{\xi}_{\lfloor\gamma N\rfloor+\lfloor P_\gamma N\rfloor}=\left(-\vec{1}_{\gn},\vec{1}_{\lfloor\gamma N\rfloor+\lfloor P_\gamma N\rfloor}^N\right).$$ 
Consequently, $V_N^{\la}(\psi^{N}(\la))\geq0$ $a.s.$ if and only if
$$1-\la-\prod_{k=1}^{\lfloor P_\gamma N\rfloor}\left(1+\frac{A_{\gamma}^{N}(k)}{N^H}\right)\geq0.$$
Or equivalently,
$$\la\leq1-\prod_{k=1}^{\lfloor P_\gamma N\rfloor}\left(1+\frac{A_{\gamma}^{N}(k)}{N^H}\right).$$
In which case, we have
\begin{equation*}
P\left(V_N^{\la}(\psi^{N}(\la))>0\right)\geq \frac1{2^{\lfloor\gamma N\rfloor}}-\frac1{2^{\lfloor\gamma N\rfloor+\lfloor P_\gamma N\rfloor}}>0,
\end{equation*}
and therefore, $\psi^{N}(\la)$ provides a $\la$-arbitrage. Consequently, we obtain that
\begin{align*}
\la(\Psi^{N})=1-\prod_{k=1}^{\lfloor P_\gamma N\rfloor}\left(1+\frac{A_{\gamma}^{N}(k)}{N^H}\right),
\end{align*}
and by \eqref{ctc} it follows that
$$\la_c^{(N)}\geq1-\prod_{k=1}^{\lfloor P_\gamma N\rfloor}\left(1+\frac{A_{\gamma}^{N}(k)}{N^H}\right).$$
Using Lemma \ref{arbi}, we deduce that
\begin{align*}
 \prod_{k=1}^{\lfloor P_\gamma N\rfloor}\left(1+\frac{A_{\gamma}^{N}(k)}{N^H}\right)&\leq \prod_{k=1}^{\lfloor P_\gamma N\rfloor}\left(1-\frac{C_{\gamma}\,N^{H-\frac{1}{2}}}{N^H}+\frac{\widehat{C}_{\gamma}}{N^H}\right)\\
 &=\left(1-\frac{C_{\gamma}}{\sqrt{N}}+\frac{\hat{C}_{\gamma}}{N^H}\right)^{\lfloor P_{\gamma} N\rfloor}\\
 &=e^{\lfloor P_{\gamma} N\rfloor\ln\left(1-\frac{C_{\gamma}}{\sqrt{N}}+\frac{\widehat{C}_{\gamma}}{N^H}\right)}\\
 &\leq e^{-C_{\gamma}^* \sqrt{N}},
\end{align*}
for some well-chosen constant $C_{\gamma}^*>0$ and $N$ sufficiently large. This concludes the proof.
\end{proof}

\subsection*{Conclusions}
In this section, we have proved that the sequence of critical transaction costs in the fractional binary markets converges to $1$. More precisely, we have constructed an explicit sequence of self-financing systems $\{\Psi^N\}_{N\geq 1}$ verifying that
$$\lambda_c^{(N)}\geq \lambda(\Psi^N)\xrightarrow[N\rightarrow\infty]{} 1.$$
In particular, for each $\lambda\in(0,1)$, the trading strategy $\psi^N(\la)$ provides a $\la$-arbitrage in the $N$-fractional binary market, when $N$ is sufficiently large. As pointed out in the introduction, this result is in apparent contradiction with the fact that the fractional binary markets approximate the fractional Black-Scholes model, which is free of arbitrage under arbitrarily small transaction costs. To explain this, we first note that
\begin{equation*}
P\left(V_N^{\la}(\psi^{N}(\la))>0\right)\leq\frac1{2^{\lfloor\gamma N\rfloor}}\xrightarrow[N\rightarrow\infty]{}0.
\end{equation*}
In other words, the probability of getting a strictly positive profit using the trading strategy $\psi^N(\la)$ vanishes in the limit when $N$ tends to $\infty$. Additionally, we observe that
$$0<\1_{\{\vec{\xi}_{\lfloor\gamma N\rfloor}=-\vec{1}_{\lfloor\gamma N\rfloor}\}}S^{(N)}_{\lfloor\gamma N\rfloor}\leq\left(1-\frac{g}{N^H}\right)^{\lfloor\gamma N\rfloor}\xrightarrow[N\rightarrow\infty]{} 0,$$
which implies that
$$0\leq V_N^{\la}(\psi^{N}(\la))\leq \1_{\{\vec{\xi}_{\lfloor\gamma N\rfloor}=-\vec{1}_{\lfloor\gamma N\rfloor}\}}S^{(N)}_{\lfloor\gamma N\rfloor}\xrightarrow[N\rightarrow\infty]{} 0.$$
This means that the profit obtained using the trading strategy $\psi^{N}(\la)$ converges to zero. 

Summarizing, the self-financing strategies constructed in this section provide arbitrage opportunities in the fractional binary markets under, arbitrarily close to one, transaction costs. However, the probability of getting such an arbitrage and the magnitude of the corresponding gain, both converge to zero. Stated differently, the arbitrage opportunities disappear in the limit. Therefore, our results are not in contradiction with the behavior under friction of the fractional Black-Scholes model. This also tells us that, even though the notion of critical transaction cost permits to characterize the existence of arbitrage in a fixed binary market, the study of critical transaction costs in a sequence does not imply any information about the behavior under friction of an eventual limit market. Moreover, any suitable notion of ``sequential'' arbitrage should impose that: (1) the probabilities of getting  strictly positive profits are uniformly bounded from below by a strictly positive constant, and (2) the corresponding gains are uniformly strictly positive. The concept of asymptotic arbitrage introduced in Section \ref{2} satisfies these conditions and then, it is worth to study the fractional binary markets from this perspective.  
\section{1-step asymptotic arbitrage opportunities}\label{5}
In this section, we aim to study the existence of asymptotic arbitrage opportunities in the fractional binary markets, when we are constrained to use only sequences of 1-step self-financing strategies (as defined in Remark \ref{1step}). When such an opportunity exists, we call it 1-step asymptotic arbitrage.

From its definition, a 1-step $\la$-self-financing strategy in the $N$-fractional binary market can be expressed, for some fixed
$n\in\{1,...,N-1\}$, $A\subset \{-1,1\}^n$ and $q:\{-1,1\}^n\rightarrow\Rb_+$, as follows:
\begin{align*}
\phi_i^{0,N}&=\phi_i^{1,N}=0,\quad \textrm{for all } i<n,\\
\phi_n^{0,N}&=\sum\limits_{\vec{x}\in A}1_{\{\vec{\xi}_n=\vec{x}\}}(1-\la)S_n^{(N)}(\vec{x})q(\vec{x})-\sum\limits_{\vec{x}\in A^c}1_{\{\vec{\xi}_n=\vec{x}\}}S_n^{(N)}(\vec{x})q(\vec{x}),\\
\phi_n^{1,N}&=-\sum\limits_{\vec{x}\in A}1_{\{\vec{\xi}_n=\vec{x}\}}q(\vec{x})+\sum\limits_{\vec{x}\in A^c}1_{\{\vec{\xi}_n=\vec{x}\}}q(\vec{x}),\\
\phi_{n+1}^{0,N}&=\sum\limits_{\vec{x}\in A}1_{\{\vec{\xi}_n=\vec{x}\}}\left[(1-\la)S_n^{(N)}(\vec{x})-S_{n+1}^{(N)}(\vec{x},\xi_{n+1})\right]q(\vec{x})\\
&\quad-\sum\limits_{\vec{x}\in A^c}1_{\{\vec{\xi}_n=\vec{x}\}}\left[S_n^{(N)}(\vec{x})-(1-\la)S_{n+1}^{(N)}(\vec{x},\xi_{n+1})\right]q(\vec{x})\\
\phi_{n+1}^{1,N}&=0,\\
\phi_k^{0,N}&=\phi_{n+1}^{0,N}\quad \textrm{for all } n+1<k\leq N,\\
\phi_k^{1,N}&=\phi_{n+1}^{1,N},\quad \textrm{for all } n+1<k\leq N.
\end{align*}

In other words, we don't do anything before time $n$; at time $n$, depending on the position $\vec{x}$, we go short or long in $q(\vec{x})$ units of stock; at time $n+1$ we liquidate the position; and after $n+1$ we don't do anything.

In particular, the value process at maturity, is given by 
\begin{align}\label{gvp}
 V_N^\la(\phi^N)&=\sum\limits_{\vec{x}\in A}1_{\{\vec{\xi}_n=\vec{x}\}}\left[(1-\la)S_n^{(N)}(\vec{x})-S_{n+1}^{(N)}(\vec{x},\xi_{n+1})\right]q(\vec{x})\nonumber\\
&\quad-\sum\limits_{\vec{x}\in A^c}1_{\{\vec{\xi}_n=\vec{x}\}}\left[S_n^{(N)}(\vec{x})-(1-\la)S_{n+1}^{(N)}(\vec{x},\xi_{n+1})\right]q(\vec{x}).
\end{align}

\subsection{1-step asymptotic arbitrage of first kind under small transaction costs}
In this section, we construct a 1-step asymptotic arbitrage when the transaction costs converge fast enough to zero. Our construction is based on the 1-step self-financing systems given in Section \ref{3}. Note first that, as shown in Section \ref{3}, the trading strategies $\{\phi^N(\la_N,N)\}_{N\geq 1}$ verify, when $\la_N=o(1/\sqrt{N})$, that
\begin{equation*}
P\left(V_N^{\la_N}(\phi^{N}(\la_N,N))>0\right)=\frac1{2^{N-1}}\xrightarrow[N\rightarrow\infty]{}0,
\end{equation*}
and then, they can not provide a 1-step asymptotic arbitrage. A different behavior is obtained, when we use the self-financing strategies $\{\phi^N(\la_N,n_H)\}_{N\geq 1}$ which verify, this time under transaction costs $\la_N=o(1/N^H)$, that
\begin{equation*}
P\left(V_N^{\la_N}(\phi^{N}(\la_N,n_H))>0\right)=\frac1{2^{n_H-1}}>0.
\end{equation*}
However, from the calculations in the proof of Theorem \ref{Sot}, it is straightforward to see that
$$V_N^{\la_N}(\phi^{N}(\la_N,n_H))\leq \frac{C}{N^H},$$
which means that the profit vanishes in the limit. On the other hand, in an asymptotic arbitrage of first kind, we have to get arbitrarily rich with a strictly positive probability. Thus, the trading strategies $\{\phi^N(\la_N,n_H)\}_{N\geq 1}$ do not provide such an asymptotic arbitrage. This problem can be solved, if instead of going short in 1-unit of stock, we go short in a sufficiently large amount of units of stock. However, we have to be careful, because if this amount of units of stock is too big, the admissibility condition can fail. These ideas are reflected in the next result.
\begin{theorem} Consider $\la_N=o(1/N^H)$ and let ${\{q_N\}}_{N\geq 1}$ be  a sequence of strictly positive numbers, verifying that
$$\frac{q_N}{N^H}\xrightarrow[N\rightarrow\infty]{}\infty\quad\textrm{and}\quad \la_N q_N\xrightarrow[N\rightarrow\infty]{}0.$$
The self-financing strategies $\{\widehat{\phi}^N\}_{N\geq 1}$, defined by $\widehat{\phi}^N:=q_N \,\phi^{N}(\la_N,n_H)$, provide a 1-step ${\{\la_N\}}_{N\geq 1}$-asymptotic arbitrage of first kind.
\end{theorem}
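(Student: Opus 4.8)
The plan is to verify directly that the amplified strategies $\widehat\phi^N=q_N\,\phi^N(\la_N,n_H)$ meet the two requirements of Definition~\ref{AA1d}. The first observation is that the liquidated value $V_i^{\la}=\phi_i^0+(1-\la)(\phi_i^1)^+S_i^{(N)}-(\phi_i^1)^-S_i^{(N)}$ is positively homogeneous in $\phi$, and that multiplication by the positive scalar $q_N$ multiplies both sides of the self-financing inequality \eqref{selffin} by $q_N$. Hence each $\widehat\phi^N$ is again a $\la_N$-self-financing strategy with zero endowment, and $V_i^{\la_N}(\widehat\phi^N)=q_N\,V_i^{\la_N}(\phi^N(\la_N,n_H))$ for every $i$. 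It therefore suffices to track the liquidated value of the unscaled strategy at each trading date and multiply by $q_N$ at the end.

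Next I would record these values. Before time $n_H-1$ nothing has been traded, so the value is $0$. At time $n_H-1$, on the event $\{\vec{\xi}_{n_H-1}=-\vec{1}_{n_H-1}\}$ one unit of stock has been sold, and closing this short position (buying back at cost $S^{(N)}_{n_H-1}$, with no cost on purchases) leaves a liquidated value of $-\la_N S^{(N)}_{n_H-1}$, while it is $0$ on the complementary event. From time $n_H$ on the position has been closed, so the value stays constant and equal to the terminal value already computed in the proof of Theorem~\ref{Sot}, namely $S^{(N)}_{n_H-1}\1_{\{\vec{\xi}_{n_H-1}=-\vec{1}_{n_H-1}\}}\bigl(-\la_N-X_{n_H}(-\vec{1}_{n_H-1},\xi_{n_H})/N^H\bigr)$. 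For the admissibility condition I would then note that the only date at which $\widehat\phi^N$ can take a negative value is $n_H-1$, where on the short-selling event it equals $-q_N\la_N S^{(N)}_{n_H-1}$ (elsewhere it is either $0$ or, for $N$ large, nonnegative by \eqref{albad}). Since $n_H$ is fixed, $S^{(N)}_{n_H-1}\to s_0$ and is bounded uniformly in $N$, so the hypothesis $\la_N q_N\to0$ makes $c_N:=q_N\la_N S^{(N)}_{n_H-1}\to0$; this yields $c_N$-admissibility with a vanishing $c_N$.

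For the profit condition I would restrict attention to the fixed event $\{\vec{\xi}_{n_H-1}=-\vec{1}_{n_H-1}\}$, whose probability $2^{-(n_H-1)}$ does not depend on $N$. There, in the worst case $\xi_{n_H}=1$ the terminal value is bounded below by $q_N S^{(N)}_{n_H-1}\bigl(-u_{n_H}^{(N)}(-\vec{1}_{n_H-1})-\la_N\bigr)$, and by \eqref{albad} the quantity $\kappa:=-N^H u_{n_H}^{(N)}(-\vec{1}_{n_H-1})=\sum_{i=1}^{n_H-1}j_{n_H}(i)-g_{n_H}$ is strictly positive and independent of $N$. Thus on this event the terminal value is at least
$$S^{(N)}_{n_H-1}\Bigl(\kappa\,\tfrac{q_N}{N^H}-q_N\la_N\Bigr),$$
whose first term diverges since $q_N/N^H\to\infty$ while the second vanishes since $\la_N q_N\to0$. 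Choosing $C_N$ to be, say, half the leading term $\tfrac12\kappa s_0\,q_N/N^H$, I obtain $C_N\to\infty$ together with $P^N\bigl(V_N^{\la_N}(\widehat\phi^N)\geq C_N\bigr)\geq 2^{-(n_H-1)}>0$ for all large $N$, which is condition~(2).

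The computations themselves are routine bookkeeping; the real content lies in the balance between the two hypotheses, which is where I expect the only genuine care to be needed. The condition $q_N/N^H\to\infty$ is precisely what turns the per-unit edge $\kappa/N^H$, positive thanks to \eqref{albad}, into an unbounded profit once it is spread over $q_N$ units, whereas $\la_N q_N\to0$ is exactly what keeps both the transaction-cost loss incurred at time $n_H-1$ and the erosion of the terminal gain negligible. The point to get right is therefore that the \emph{same} scaling $q_N$ can simultaneously satisfy the admissibility bound and the divergence of the profit; the strict positivity of $\kappa$ is what guarantees a genuine edge to amplify, and the two growth conditions on $q_N$ are compatible precisely because they are imposed at the scales $N^H$ and $1/\la_N$ respectively.
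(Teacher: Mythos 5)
Your proposal is correct and follows essentially the same route as the paper's own proof: scale the Sottinen-type $1$-step strategy at the fixed level $n_H$, use \eqref{albad} to get the strictly positive per-unit edge $\theta_{n_H}=\sum_{i=1}^{n_H-1}j_{n_H}(i)-g_{n_H}$ (your $\kappa$), let $q_N/N^H\to\infty$ drive the profit $C_N\to\infty$ on the event of probability $2^{-(n_H-1)}$, and let $\la_N q_N\to 0$ give $c_N$-admissibility via the time-$(n_H-1)$ value $-\la_N q_N S^{(N)}_{n_H-1}$. The only cosmetic differences are your explicit homogeneity remark and your choice of a clean deterministic $C_N=\tfrac12\kappa s_0 q_N/N^H$ instead of the paper's exact value on the short-selling event.
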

\begin{proof}
Let's fix ${\{\la_N\}}_{N\geq 1}$ and ${\{q_N\}}_{N\geq 1}$ as in the statement. Note that the value process of the self-financing strategy $\widehat{\phi}^N$
satisfies
$$V_i^{\la_N}(\widehat{\phi}^{N})=q_N\,V_i^{\la_N}(\phi^{N}(\la_N,n_H)),\quad{i=1,...,N}.$$
In particular, from the estimations in the proof of Theorem \ref{Sot}, we obtain that
\begin{equation}\label{vpmat}
 V_N^{\la_N}(\widehat{\phi}^{N})\geq q_N\, S^{(N)}_{n_H-1}\1_{\{\vec{\xi}_{n_H-1}=-\vec{1}_{n_H-1}\}}\left(-\la_N+\frac{\theta_{n_H}}{N^H}\right),
\end{equation}
where $\theta_{n_H}=\sum_{i=1}^{n_H-1}j_{n_H}(i)-g_{n_H}$, which, from \eqref{albad}, is strictly positive.
From the properties of $\la_N$ and $q_N$ it follows that
$$q_N\left(-\la_N+\frac{\theta_{n_H}}{N^H}\right)\xrightarrow[N\rightarrow\infty]{}\infty.$$
Additionally, we have that
$$S^{(N)}_{n_H-1}(-\vec{1}_{n_H-1})=\prod\limits_{k=1}^{n_H-1}\left(1-\frac{\left(\sum\limits_{i=1}^{k-1}j_{k}(i)+g_{k}\right)}{N^H}  \right)s_0\xrightarrow[N\rightarrow\infty]{}s_0.$$
Therefore, if we define
$$C_N:=q_N\left(-\la_N+\frac{\theta_{n_H}}{N^H}\right)\,S^{(N)}_{n_H-1}(-\vec{1}_{n_H-1})\xrightarrow[N\rightarrow\infty]{}\infty,$$
we deduce that
$$P\left(V_N^{\la_N}(\widehat{\phi}^{N})\geq C_N\right)=\frac{1}{2^{n_H-1}}.$$
It remains only to check the admissibility conditions. Before time $n_H-1$, the value process is zero and then any admissibility condition is verified. Similarly, after time $n_H-1$ the value process is equal to the value process at maturity, which from \eqref{vpmat} is bigger or equal than zero, at least for $N$ large enough. Thus, we have only to check an appropriate admissibility at time $n_H-1$. Note that
$$ V_{n_H-1}^{\la_N}(\widehat{\phi}^{N})=-\la_N\,q_N\,S^{(N)}_{n_H-1}(-\vec{1}_{n_H-1})\1_{\{\vec{\xi}_{n_H-1}=-\vec{1}_{n_H-1}\}},$$
and then, if we define $c_N:=\la_N\,q_N\,S^{(N)}_{n_H-1}(-\vec{1}_{n_H-1})$, we have that $c_N\xrightarrow[N\rightarrow\infty]{} 0$ and that, the self-financing strategy $\widehat{\phi}^{N}$ is $c_N$-admissible. The proof is then concluded.
\end{proof}
\subsection{Absence of asymptotic arbitrage when the transaction costs converge slowly to zero}
Now, we are interested to provide a condition, in the sequence of transaction costs, avoiding the 1-step asymptotic arbitrage opportunities. The next result gives us an important estimate in order to deal with this problem.
\begin{lemma}\label{ubx}
There is a constant $c_X>0$, such that, for all $n>1$, we have
$$|X_n|\leq c_X\, n^{H-\frac{1}{2}}.$$
\end{lemma}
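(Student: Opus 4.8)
The plan is to bound $|X_n|$ by controlling $\Ys_n$ and $g_n$ separately, using the decomposition $X_n=\Ys_n+g_n\xi_n$ established in \eqref{scale}. Since $|\xi_i|=1$, the crude but effective estimate is $|X_n|\leq\sum_{i=1}^{n-1}|j_n(i)|+g_n=\sum_{i=1}^{n-1}j_n(i)+g_n$, where I have used that $j_n(i)\geq 0$ (which follows from Lemma \ref{ej1}, since $I_n(i)\geq 0$ because $\phi_n(x)=(n-x)^{H-\frac12}-(n-1-x)^{H-\frac12}\geq 0$ for $H>\frac12$). Thus it suffices to produce an upper bound of the required order $n^{H-\frac12}$ for the sum $\sum_{i=1}^{n-1}j_n(i)$ and a bounded contribution from $g_n$.

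First I would handle $g_n$: by Lemma \ref{eg1}, $g_n\leq g\,2^{H-\frac12}$ is uniformly bounded, so this term is harmless and in particular $O(1)=O(n^{H-\frac12})$ is immediate. The main work is the sum. Using the upper bound in Lemma \ref{ej1}, $j_n(i)\leq c_*\,n^{H-\frac12}I_n(i)$, I would write
\begin{align*}
\sum_{i=1}^{n-1}j_n(i)\leq c_*\,n^{H-\frac12}\sum_{i=1}^{n-1}I_n(i)=c_*\,n^{H-\frac12}\int\limits_0^{n-1}x^{\frac12-H}\phi_n(x)\,dx.
\end{align*}
The crucial remaining task is to show that $\int_0^{n-1}x^{\frac12-H}\phi_n(x)\,dx$ is bounded uniformly in $n$. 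This is precisely the integral identity already worked out in the proof of Theorem \ref{Sot} (equation \eqref{integ}), which expresses it as a difference of Beta-type integrals, $\int_0^1 u^{\frac12-H}(1-u)^{H-\frac12}\,du$ minus a nonnegative correction term controlled by \eqref{ineq}. Since the first integral is a finite constant (the Beta function $B(\frac32-H,H+\frac12)$, convergent because both exponents $\frac12-H>-1$ and $H-\frac12>-1$) and the subtracted term is nonnegative, the whole integral is bounded above by this constant uniformly in $n$.

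Combining these two bounds yields $\sum_{i=1}^{n-1}j_n(i)\leq \bar{c}\,n^{H-\frac12}$ for a constant $\bar c>0$, and together with the bounded $g_n$ term this gives $|X_n|\leq c_X\,n^{H-\frac12}$ for $n>1$ and a suitable $c_X>0$, after absorbing the $O(1)$ contribution of $g_n$ into the dominant $n^{H-\frac12}$ term (valid since $n^{H-\frac12}\to\infty$, or more simply by taking $c_X$ large enough to cover small $n$). The main obstacle, such as it is, lies in justifying the uniform boundedness of the integral: one must be careful that although the integration domain $[0,n-1]$ grows with $n$, the change of variables $x=n u$ (respectively $x=(n-1)u$) rescales it back to a fixed domain contained in $[0,1]$, and the singularity of $u^{\frac12-H}$ at the origin is integrable since $\frac12-H>-1$. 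This is exactly the computation already carried out in \eqref{integ}--\eqref{ineq}, so no genuinely new estimate is needed beyond invoking those displays.
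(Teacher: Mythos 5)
Your proof is correct and follows essentially the same route as the paper's: bound $|X_n|\leq\sum_{i=1}^{n-1}j_n(i)+g_n$, apply the upper estimate of Lemma \ref{ej1}, invoke the identity \eqref{integ} with the nonnegativity from \eqref{ineq} to bound the rescaled integral by the Beta-type constant, and control $g_n$ via Lemma \ref{eg1}. The only cosmetic difference is that the paper bounds the Beta integral explicitly by $1/(\tfrac32-H)$ and absorbs $g_n$ using $2^{H-\frac12}\leq n^{H-\frac12}$, while you simply note finiteness and enlarge $c_X$; both are equally valid.
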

\begin{proof}
 The result is obtained following a similar calculation as in Theorem \ref{Sot}. More precisely, for all $n\geq2$, using \eqref{integ}, \eqref{ineq} and the upper estimate given in Lemma~\ref{ej1}, we have that
 \begin{align*}
 |X_n|&\leq \sum_{i=1}^{n-1}j_n(i)+g_n\leq c_*n^{H-\frac12}\int_0^{n-1}x^{\frac12-H}\phi_n(x)dx+g_n\\
 &=c_*n^{H-\frac12}\left(\int\limits_{0}^{1} u^{\frac{1}{2}-H} (1-u)^{H-\frac{1}{2}} du- n \int\limits_{1-\frac{1}{n}}^{1} u^{\frac{1}{2}-H} (1-u)^{H-\frac{1}{2}}du\right)+g_n\\
 &< c_*n^{H-\frac12}\int\limits_{0}^{1} u^{\frac{1}{2}-H} (1-u)^{H-\frac{1}{2}} du+g2^{H-\frac12}\\
 &\leq \frac{c_*}{\frac32-H}n^{H-\frac12}+gn^{H-\frac12}=c_Xn^{H-\frac12},
 \end{align*}
where $c_X:=\frac{c_*}{\frac32-H}+g$.
\end{proof}
\begin{theorem}
If the sequence of positive real numbers ${\{\la_N\}}_{N\geq 1}$ verifies that 
$$\la_N \sqrt{N}\xrightarrow[N\rightarrow\infty]{}\infty,$$ 
then there are neither 1-step ${\{\la_N\}}_{N\geq 1}$-asymptotic arbitrage opportunities of first kind nor 1-step ${\{\la_N\}}_{N\geq 1}$-asymptotic arbitrage opportunities of second kind.  
\end{theorem}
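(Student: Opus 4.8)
The plan is to show that, under the hypothesis $\la_N\sqrt{N}\to\infty$, the terminal value of \emph{every} 1-step $\la_N$-self-financing strategy is nonpositive for all $N$ large enough. Since both notions of asymptotic arbitrage require the terminal value to exceed a strictly positive threshold with non-vanishing probability, this deterministic sign constraint rules out AA1 and AA2 at once, and indeed makes the admissibility conditions irrelevant to the argument.

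First I would start from the explicit expression \eqref{gvp} for the maturity value of a 1-step strategy determined by its data $(n,A,q)$, and factor out the strictly positive quantity $S_n^{(N)}(\vec{x})$ from each summand. Using $S_{n+1}^{(N)}(\vec{x},\xi_{n+1})=\bigl(1+\tfrac{X_{n+1}(\vec{x},\xi_{n+1})}{N^H}\bigr)S_n^{(N)}(\vec{x})$, a short position ($\vec{x}\in A$) carries per-unit return $-\la_N-\tfrac{X_{n+1}}{N^H}$, while a long position ($\vec{x}\in A^c$) carries per-unit return $-\la_N+(1-\la_N)\tfrac{X_{n+1}}{N^H}$. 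Since $1-\la_N\leq 1$, both returns are bounded above by $-\la_N+\tfrac{|X_{n+1}|}{N^H}$.

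The decisive estimate then comes from Lemma \ref{ubx}: because $n+1\leq N$, one has $|X_{n+1}|\leq c_X(n+1)^{H-\frac12}\leq c_X N^{H-\frac12}$, hence $\tfrac{|X_{n+1}|}{N^H}\leq\tfrac{c_X}{\sqrt{N}}$, a bound uniform over every admissible choice of $n\in\{1,\ldots,N-1\}$. Thus every per-unit return is at most $-\la_N+\tfrac{c_X}{\sqrt{N}}$. Because $\la_N\sqrt{N}\to\infty$, there is $N_0$ with $\la_N\sqrt{N}>c_X$ for $N\geq N_0$, so that $-\la_N+\tfrac{c_X}{\sqrt{N}}<0$. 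Along the realized path exactly one indicator $\1_{\{\vec{\xi}_n=\vec{x}\}}$ is active, and since $S_n^{(N)}>0$ and $q\geq0$, I conclude $V_N^{\la_N}(\phi^N)\leq0$ for every 1-step strategy and every $N\geq N_0$.

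It remains to translate this into the absence of both kinds of arbitrage. For AA1 one needs $\lim_N P^N(V_N^{\la_N}(\phi^N)\geq C_N)>0$ with $C_N\to\infty$, but $V_N^{\la_N}(\phi^N)\leq0<C_N$ forces this probability to vanish for $N\geq N_0$; for AA2 one needs $\lim_N P^N(V_N^{\la_N}(\phi^N)\geq\alpha)=1$ for some $\alpha>0$, and again $V_N^{\la_N}(\phi^N)\leq0<\alpha$ makes the probability zero. Hence no 1-step asymptotic arbitrage of either kind can exist. The only genuinely substantive ingredient is the uniform $O(1/\sqrt{N})$ control of the single-step displacement furnished by Lemma \ref{ubx}: it says the largest favorable price move over one step is of order $1/\sqrt{N}$, so once the round-trip cost $\la_N$ dominates $1/\sqrt{N}$ the payoff is deterministically nonpositive, which is exactly the regime $\la_N\sqrt{N}\to\infty$ and the complement of the range $o(1/\sqrt{N})$ appearing in Theorem \ref{Sot}.
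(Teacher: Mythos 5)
Your proposal is correct and follows essentially the same route as the paper's own proof: the same decomposition of the terminal value \eqref{gvp} into per-unit returns $-\la_N-\tfrac{X_{n+1}}{N^H}$ (short) and $-\la_N+(1-\la_N)\tfrac{X_{n+1}}{N^H}$ (long), the same uniform bound $\tfrac{|X_{n+1}|}{N^H}\leq\tfrac{c_X}{\sqrt{N}}$ from Lemma \ref{ubx}, and the same conclusion that $V_N^{\la_N}(\phi^N)\leq 0$ a.s.\ for $N$ large, which rules out both kinds of asymptotic arbitrage. The only difference is cosmetic: you spell out explicitly the monotonicity step $(n+1)^{H-\frac12}\leq N^{H-\frac12}$ that the paper leaves implicit.
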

\begin{proof}
 Consider ${\{\la_N\}}_{N\geq 1}$ as in the statement and,  for each $N\geq 1$, a 1-step $\la_N$-self-financing strategy $\phi^N$. From the discussion at the beginning of this section and \eqref{gvp}, we know that the value process of $\phi^N$ at maturity has the following form
 \begin{align*}
  V_N^{\la_N}(\phi^N)&=\sum\limits_{\vec{x}\in A_N}1_{\{\vec{\xi}_{n_N}=\vec{x}\}}\left[(1-\la_N)S_{n_N}^{(N)}(\vec{x})-S_{n_N+1}^{(N)}(\vec{x},\xi_{n_N+1})\right]q_N(\vec{x})\\
&\quad-\sum\limits_{\vec{x}\in A_N^c}1_{\{\vec{\xi}_{n_N}=\vec{x}\}}\left[S_{n_N}^{(N)}(\vec{x})-(1-\la_N)S_{n_N+1}^{(N)}(\vec{x},\xi_{n_N+1})\right]q_N(\vec{x}).
\end{align*}
 where $n_N\in\{1,...,N-1\}$, $A_N\subset \{-1,1\}^{n_N}$ and $q_N:\{-1,1\}^{n_N}\rightarrow\Rb_+$. In particular, we have, for all $\vec{x}\in A_N$, that
 $$V_N^{\la_N}(\phi^N)1_{\{\vec{\xi}_{n_N}=\vec{x}\}}=1_{\{\vec{\xi}_{n_N}=\vec{x}\}}S_{n_N}^{(N)}(\vec{x})\left[-\la_N-\frac{X_{n_N+1}(\vec{x},\xi_{n_N+1})}{N^H}\right]q_N(\vec{x}).$$
Similarly, for each $\vec{x}\in A_N^c$, we have
  $$V_N^{\la_N}(\phi^N)1_{\{\vec{\xi}_{n_N}=\vec{x}\}}=1_{\{\vec{\xi}_{n_N}=\vec{x}\}}S_{n_N}^{(N)}(\vec{x})\left[-\la_N+\frac{(1-\la_N)X_{n_N+1}(\vec{x},\xi_{n_N+1})}{N^H}\right]q_N(\vec{x}).$$
Using the two previous identities and Lemma \ref{ubx}, we obtain that
 $$V_N^{\la_N}(\phi^N)\leq S_{n_N}^{(N)}(\vec{\xi}_{n_N})\left(-\la_N+\frac{c_X}{\sqrt{N}}\right)q_N(\vec{\xi}_{n_N}).$$
From the asymptotic behavior of $\la_N$, we conclude, for all $N$ sufficiently large, that
$$V_N^{\la_N}(\phi^N)\leq 0\quad \textrm{a.s.}.$$
And this implies that the sequence $\{\phi^N\}_{N\geq 1}$ does not provide an asymptotic arbitrage of first kind nor an asymptotic arbitrage of second kind. The result follows from the arbitrariness of this sequence.
\end{proof}
\begin{remark}
 The proof of the previous proposition also shows that, if 
 $$\la_N \sqrt{N}\xrightarrow[N\rightarrow\infty]{}\infty,$$
 then for all $N$ sufficiently large, the $N$-fractional binary market is free of 1-step arbitrage opportunities. In particular, if we define
 $$\la_{c,1}^{(N)}:=\inf \{\la\in[0,1]: \nexists\textrm{ 1-step $\la$-arbitrage in the $N$-fractional binary market }\},$$
 then we have that
 $$\la_{c,1}^{(N)}\xrightarrow[N\rightarrow\infty]{}0.$$
\end{remark}

\subsection{Absence of asymptotic arbitrage of second kind in the frictionless case}
In this section, we study the asymptotic arbitrage opportunities of second kind in the frictionless case. More precisely, we will prove that such opportunities do not exist when the Hurst parameter $H$ is close enough to $1/2$. 

First, let us introduce the following sets
$$\As_n^{u,H}:=\{\vec{x}\in {\{-1,1\}}^{n-1}: u_n^{(N)}(\vec{x})\leq 0\}=\{\vec{x}\in {\{-1,1\}}^{n-1}: \sum\limits_{i=1}^{n-1}j_{n}(i)x_i+g_{n}\leq 0\},$$
and
$$\As_n^{d,H}:=\{\vec{x}\in {\{-1,1\}}^{n-1}: d_n^{(N)}(\vec{x})\geq 0\}=\{\vec{x}\in {\{-1,1\}}^{n-1}: \sum\limits_{i=1}^{n-1}j_{n}(i)x_i-g_{n}\geq 0\}.$$
The set $\As_n^H:=\As_n^{u,H}\cup\As_n^{d,H}$ is the set of arbitrage points at level $n$ in the fractional binary markets with Hurst parameter $H$. Note that these sets do not depend on $N$, and the reason is that the sign of $u_n^{(N)}$ and $d_n^{(N)}$ does not depend on $N$. This simplification comes from the fact that we are treating the fractional binary markets without drift, i.e. $a_n^{(N)}=0$.

Now define
$$\nu_H:=\sup\limits_{n\geq 1}\frac{|\As_n^H|}{2^{n-1}}\leq 1\quad\textrm{and}\quad H_*:=\inf\{H\in(1/2,1]: \nu_H=1\}.$$
\begin{lemma}\label{apap}
We have that  $H_*\in(1/2,1]$.
\end{lemma}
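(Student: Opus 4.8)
The plan is to translate the combinatorial quantity $\nu_H$ into a probabilistic statement and then to control it by a second-moment estimate. Observe first that, since $a_n^{(N)}=0$, one has $\As_n^{u,H}=\{\vec x:\Ys_n(\vec x)\le -g_n\}$ and $\As_n^{d,H}=\{\vec x:\Ys_n(\vec x)\ge g_n\}$; as $g_n>0$ these are disjoint and their union is $\{\vec x:|\Ys_n(\vec x)|\ge g_n\}$. Hence, putting the uniform measure on $\{-1,1\}^{n-1}$ and recalling that $\Ys_n=\sum_{i=1}^{n-1}j_n(i)\xi_i$ with $\xi_i$ i.i.d. symmetric, the proportion of arbitrage points at level $n$ is exactly $|\As_n^H|/2^{n-1}=P(|\Ys_n|\ge g_n)$, so that
$$\nu_H=\sup_{n\ge 1}P\big(|\Ys_n|\ge g_n\big).$$
The statement $H_*\in(1/2,1]$ then amounts to two claims: that the set $\{H:\nu_H=1\}$ is nonempty (giving $H_*\le 1$), and that $\nu_H<1$ for all $H$ sufficiently close to $1/2$ (giving $H_*>1/2$).

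For the bound $H_*\le 1$ I would simply exhibit a value of $H$ in the set, the cleanest choice being $H=1$. There $\Gamma(2-2H)=\Gamma(0)$ has a pole, so $c_H=0$ by \eqref{ch}, whence every $j_n(i)$ and every $g_n$ vanishes and $u_n^{(N)}\equiv d_n^{(N)}\equiv 0$. Condition \eqref{nac1} then fails at every node, so $\As_n^1=\{-1,1\}^{n-1}$ for all $n$ and $\nu_1=1$. Thus $1\in\{H:\nu_H=1\}$ and $H_*\le 1$.

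The core of the lemma is $H_*>1/2$. Here I would use Chebyshev's inequality: since $\E\,\Ys_n=0$,
$$P\big(|\Ys_n|\ge g_n\big)\le \frac{\Var(\Ys_n)}{g_n^2}=\frac{\sum_{i=1}^{n-1}j_n(i)^2}{g_n^2}.$$
By Lemma \ref{eg1} we have $g_n\ge g=\sigma c_H/(H+\tfrac12)$, and $g\to\sigma>0$ as $H\to 1/2$. It therefore suffices to prove the key uniform estimate
$$\sup_{n\ge 1}\ \sum_{i=1}^{n-1}j_n(i)^2\ \xrightarrow[H\to 1/2^+]{}\ 0,$$
for then, for every $H$ in a right-neighbourhood $(1/2,1/2+\varepsilon)$ of $1/2$, the Chebyshev bound is $\le 1-\delta$ uniformly in $n$, so $\nu_H\le 1-\delta<1$; consequently $\{H:\nu_H=1\}\subseteq[1/2+\varepsilon,1]$ and $H_*\ge 1/2+\varepsilon>1/2$.

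The main obstacle is precisely this uniform-in-$n$ variance estimate, for which the naive bounds are too lossy. Starting from $j_n(i)\le c_*\,n^{H-1/2}I_n(i)$ (Lemma \ref{ej1}), the crude bound $\sum_i I_n(i)^2\le(\max_i I_n(i))\sum_i I_n(i)$ combined with \eqref{integ} only yields something of order $n^{H-1/2}$, which diverges. Instead I would sum the squares honestly, organising the sum by the distance $k=n-i$ to the endpoint: using the boundary behaviour of $\phi_n$ one shows $I_n(n-k)\le \widehat{c}\,n^{1/2-H}\Delta_k$ with $\Delta_k=O\big((H-\tfrac12)k^{H-3/2}\big)$, so that $\sum_k \Delta_k^2<\infty$ exactly because $2H-3<-1$ for $H<1$. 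The factor $n^{1-2H}$ coming from $I_n(n-k)^2$ then cancels the $n^{2H-1}$ coming from $j_n(i)^2\le c_*^2 n^{2H-1}I_n(i)^2$, leaving a bound $\sum_i j_n(i)^2\le C(H)$ independent of $n$ with $C(H)=O\big((H-\tfrac12)^2\big)\to 0$ as $H\to 1/2^+$. Checking that the far-away terms ($k$ comparable to $n$) are negligible and that the constants are genuinely uniform in $n$ is the delicate point, but it is of the same nature as the integral estimates already carried out in the proofs of Theorem \ref{Sot} and Lemma \ref{ubx}.
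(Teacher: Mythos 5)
Your proposal is correct and, at its core, runs along the same lines as the paper's proof: both rewrite $|\As_n^H|/2^{n-1}$ as $P(|\Ys_n|\ge g_n)$, apply Chebyshev's inequality, bound $g_n$ from below by $g=\sigma c_H/(H+\tfrac12)$ via Lemma \ref{eg1}, and conclude from the behaviour of $c_H$ near $H=\tfrac12$ together with its continuity. The genuine difference is where the uniform-in-$n$ variance bound comes from: the paper simply cites from \cite{CKP} the estimate $\Var(\Ys_n)\le\sigma^2\bigl(1-\tfrac{c_H^2}{(H+1/2)^2}\bigr)$, whose right-hand side vanishes as $H\to\tfrac12^+$ because $c_H\to1$, whereas you re-derive such a bound by hand, showing $\sup_n\sum_i j_n(i)^2=O\bigl((H-\tfrac12)^2\bigr)$ through a direct summation. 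Your route buys self-containedness at the price of redoing work the paper outsources; it is feasible, but with one correction: the pointwise bound you state, $I_n(n-k)\le\widehat c\,n^{1/2-H}\Delta_k$, fails when $k$ is comparable to $n$, since the mean-value estimate actually produces the prefactor $(n-k-1)^{1/2-H}$, which is \emph{larger} than $n^{1/2-H}$; as you anticipate, this is harmless because after multiplying by $n^{2H-1}$ those far terms contribute only $O(n^{2H-2})$, so the uniform bound survives, but the inequality as written should be restricted to $k\le n/2$ (say) with the remaining range treated separately. Finally, you explicitly establish $H_*\le1$ by exhibiting $\nu_1=1$ in the degenerate case $c_1=0$ (where $u_n\equiv d_n\equiv0$, so every node lies in $\As_n^{u,H}\cup\As_n^{d,H}$ by the defining inequalities); the paper's proof is silent on the nonemptiness of $\{H:\nu_H=1\}$ and treats $H_*\le1$ as implicit in the definition, so this extra step of yours is legitimate bookkeeping rather than a divergence in method.
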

\begin{proof}
It is straightforward to see that
$$|\As_1^H|=0.$$
On the other hand, we know from \cite{CKP} that for $n>1$
$$Var(\Ys_n)\leq \sigma^2\left(1- \frac{c_H^2}{(H+\frac{1}{2})^2}\right),$$
and that 
$$\frac{|\As_n^H|}{2^{n-1}}=P(|\Ys_n|\geq g_n).$$
Therefore, using Tchebysheff's inequality and Lemma \ref{eg1}, we deduce that
$$\frac{|\As_n^H|}{2^{n-1}}\leq \frac{\left(H+\frac12\right)^2}{c_H^2}-1.$$
From \eqref{ch} and the properties of the Gamma function, it follows that $c_H$ converges to $1$ when $H$ tends to $1/2$. 
As a consequence, we obtain that
$$\lim\limits_{H\rightarrow\frac{1}{2}}\,\sup\limits_{n\geq 1}\frac{|\As_n^H|}{2^{n-1}}=0.$$
The result follows from the continuity of the function $H\in(1/2,1)\mapsto c_H$.
\end{proof}

\begin{theorem}\label{aa2fc}
If $H\in(1/2,H_*)$, there is no 1-step asymptotic arbitrage of second kind in the sequence of fractional binary markets without friction. 
\end{theorem}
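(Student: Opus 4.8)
The plan is to argue by contradiction. Suppose there were a 1-step asymptotic arbitrage of second kind. Passing to the subsequence, this gives, for each $N$, a level $n_N\in\{1,\dots,N-1\}$, a set $A_N\subset\{-1,1\}^{n_N}$, a quantity $q_N:\{-1,1\}^{n_N}\to\Rb_+$ and a fixed constant $\alpha>0$ such that, in the frictionless case ($\la=0$), $P\big(V_N(\phi^N)\geq\alpha\big)\to1$. I would then show that $P\big(V_N(\phi^N)<\alpha\big)$ stays bounded away from zero by a constant depending only on $\nu_H$, which is strictly below $1$ because $H<H_*$; this contradicts property~(2) of Definition~\ref{AA2d}.

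First I would rewrite the value process via \eqref{gvp} with $\la=0$. On the event $\{\vec\xi_{n_N}=\vec x\}$ one gets $V_N(\phi^N)=-S_{n_N}^{(N)}(\vec x)\,\frac{X_{n_N+1}(\vec x,\xi_{n_N+1})}{N^H}\,q_N(\vec x)$ when $\vec x\in A_N$ (short) and $V_N(\phi^N)=+S_{n_N}^{(N)}(\vec x)\,\frac{X_{n_N+1}(\vec x,\xi_{n_N+1})}{N^H}\,q_N(\vec x)$ when $\vec x\in A_N^c$ (long), where $X_{n_N+1}(\vec x,+1)=N^H u_{n_N+1}^{(N)}(\vec x)$ and $X_{n_N+1}(\vec x,-1)=N^H d_{n_N+1}^{(N)}(\vec x)$.

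The key step is the analysis at the \emph{non-arbitrage} points, i.e.\ at $\vec x\notin\As_{n_N+1}^H$, where $d_{n_N+1}^{(N)}(\vec x)<0<u_{n_N+1}^{(N)}(\vec x)$. If $q_N(\vec x)=0$, then $V_N(\phi^N)=0<\alpha$ on $\{\vec\xi_{n_N}=\vec x\}$. If $q_N(\vec x)>0$, then whether the position is short or long, exactly one of the two equally likely outcomes $\xi_{n_N+1}=\pm1$ gives $X_{n_N+1}$ the sign that produces a strict loss $V_N(\phi^N)<0<\alpha$ (for a short the bad outcome is $\xi_{n_N+1}=+1$, for a long it is $\xi_{n_N+1}=-1$). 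In either case the conditional probability of $\{V_N(\phi^N)<\alpha\}$ given $\{\vec\xi_{n_N}=\vec x\}$ is at least $1/2$. Summing over $\vec x\notin\As_{n_N+1}^H$, each path having probability $2^{-n_N}$, I would obtain
$$P\big(V_N(\phi^N)<\alpha\big)\ \geq\ \frac12\Big(1-\frac{|\As_{n_N+1}^H|}{2^{n_N}}\Big)\ \geq\ \frac12\,(1-\nu_H).$$

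Finally, since $H<H_*=\inf\{H:\nu_H=1\}$ forces $\nu_H<1$ (any $H$ with $\nu_H=1$ would lie at or above $H_*$, and $\nu_H\leq1$ always), the right-hand side is a strictly positive constant independent of $N$. Hence $\limsup_N P\big(V_N(\phi^N)\geq\alpha\big)\leq1-\frac12(1-\nu_H)<1$, contradicting the convergence to one and proving the theorem. The only delicate point I expect is the bookkeeping at the non-arbitrage points: one must check that a genuine position there is a fair bet carrying a guaranteed conditional probability $1/2$ of a strictly negative payoff, uniformly over $\vec x$, $A_N$ and $q_N$. It is worth noting that the $1$-admissibility condition of Definition~\ref{AA2d} is not even used, so the argument rules out AA2 for \emph{any} sequence of 1-step strategies.
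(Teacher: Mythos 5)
Your proof is correct and follows essentially the same route as the paper's: both rest on the observation that at any non-arbitrage point $\vec x\notin\As_{n_N+1}^H$ a 1-step position loses (or earns nothing) with conditional probability at least $1/2$, and both then invoke $\nu_H<1$ for $H<H_*$ to bound $P\big(V_N(\phi^N)\geq\alpha\big)$ by $\tfrac{1+\nu_H}{2}<1$, contradicting condition (2) of Definition~\ref{AA2d}. The only differences are cosmetic — you bound the complement $P(V_N<\alpha)$ from below instead of bounding $P(V_N\geq\alpha)$ from above, and your explicit handling of the case $q_N(\vec x)=0$ is in fact slightly more careful than the paper's case analysis.
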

\begin{proof}
Fix $H\in(1/2,H_*)$ and let $\{\phi^N\}_{N\geq 1}$ be a sequence of 1-step self-financing strategies. We know from \eqref{gvp} that the value process of $\phi^N$ at maturity has the following form
 \begin{align*}
  V_N(\phi^N)&=\sum\limits_{\vec{x}\in A_N}1_{\{\vec{\xi}_{n_N}=\vec{x}\}}\left[S_{n_N}^{(N)}(\vec{x})-S_{n_N+1}^{(N)}(\vec{x},\xi_{n_N+1})\right]q_N(\vec{x})\\
&\quad-\sum\limits_{\vec{x}\in A_N^c}1_{\{\vec{\xi}_{n_N}=\vec{x}\}}\left[S_{n_N}^{(N)}(\vec{x})-S_{n_N+1}^{(N)}(\vec{x},\xi_{n_N+1})\right]q_N(\vec{x}).
\end{align*}
where $n_N\in\{1,...,N-1\}$, $A_N\subset \{-1,1\}^{n_N}$ and $q_N:\{-1,1\}^{n_N}\rightarrow\Rb_+$. In particular, we have, for all $\vec{x}\in A_N$, that
 $$V_N(\phi^N)1_{\{\vec{\xi}_{n_N}=\vec{x}\}}=-1_{\{\vec{\xi}_{n_N}=\vec{x}\}}\frac{\left(\sum\limits_{i=1}^{n_N}j_{n_N+1}(i)x_i+g_{n_N+1}\xi_{n_N+1}\right)}{N^H}S_{n_N}^{(N)}(\vec{x})q_N(\vec{x}).$$
Similarly, for each $\vec{x}\in A_N^c$, we have
  $$V_N(\phi^N)1_{\{\vec{\xi}_{n_N}=\vec{x}\}}=1_{\{\vec{\xi}_{n_N}=\vec{x}\}}\frac{\left(\sum\limits_{i=1}^{n_N}j_{n_N+1}(i)x_i+g_{n_N+1}\xi_{n_N+1}\right)}{N^H}S_{n_N}^{(N)}(\vec{x})q_N(\vec{x}).$$
Now we analyze the different situations. If $\vec{x}\in A_N\cap\As_{n_N+1}^{u,H}$ or $\vec{x}\in A_N^c\cap\As_{n_N+1}^{d,H}$, we have
$$P\left(\{V_N(\phi^N)\geq 0\}\cap{\{\vec{\xi}_{n_N}=\vec{x}\}}\right)=\frac{1}{2^{n_N}}.$$
If $\vec{x}\in A_N\cap\As_{n_N+1}^{d,H}$ or $\vec{x}\in A_N^c\cap\As_{n_N+1}^{u,H}$, 
$$P\left(\{V_N(\phi^N)> 0\}\cap{\{\vec{\xi}_{n_N}=\vec{x}\}}\right)=0.$$
If $\vec{x}\in \left(\As_{n_N+1}^H\right)^c$, then
$$P\left(\{V_N(\phi^N)\geq 0\}\cap{\{\vec{\xi}_{n_N}=\vec{x}\}}\right)=\frac{1}{2^{n_N+1}}.$$
Consequently, for any $\alpha>0$, we have
\begin{align*}
 P\left(V_N(\phi^N)\geq \alpha\right)&\leq \frac{|\As_{n_N+1}^H|}{2^{n_N}}+ \frac{|\left(\As_{n_N+1}^H\right)^c|}{2^{n_N+1}}\\
 &=\frac{1}{2}+\frac{1}{2}\times \frac{|\As_{n_N+1}^H|}{2^{n_N}}.
\end{align*}
Thus, using Lemma \ref{apap}, we deduce that
$$P\left(V_N(\phi^N)\geq \alpha\right)<\frac{1+\nu_H}{2}<1.$$
Therefore, we conclude that $\{\phi^N\}_{N\geq 1}$ does not provide an asymptotic arbitrage of second kind. The proposition follows from the generality of the 1-step self-financing strategies $\{\phi^N\}_{N\geq 1}$.
\end{proof}
\begin{remark}
The problem of determining if $H_*<1$ is, at our knowledge, still an open problem.  
\end{remark}

\subsection*{Future research}
 Having studied the existence of 1-step asymptotic arbitrage opportunities, it seems natural to analyze the asymptotic arbitrage under more general sequences of trading strategies. In particular, one could be interested in the existence of a stronger form of asymptotic arbitrage. This is the content of a forthcoming paper.

\bibliographystyle{plain}
\bibliography{reference}
\end{document}